\newtheorem{theorem}{Theorem}
\newtheorem{lemma}{Lemma}
\newtheorem{example}{Example}
\newtheorem{definition}{Definition}
\newtheorem{cor}{Corollary}
\newtheorem{rem}{Remark}
\newtheorem{proposition}{Proposition}
\newtheorem*{claim}{Claim}
\def\ld{\mathbin{\backslash}}
\def\rd{\mathbin{/}}
\newcommand{\bl}{\underline{\hspace{1ex}}}
\newcommand{\blambda}{\boldsymbol{\lambda}}
\newcommand{\brho}{\boldsymbol{\rho}}
\title{Two-sided wreath product done right}
\author{Michal Botur, Tomasz Kowalski}
\begin{document}

\begin{abstract}
We investigate a semigroup construction related to the two-sided
wreath product. It encompasses a range of known constructions and gives a 
slightly finer version of the decomposition in the Krohn-Rhodes Theorem,
in which the three-element
flip-flop is replaced by the two-element semilattice. We develop foundations of
the theory of our construction, showing in the process that
it naturally combines ideas from semigroup theory
(wreath products), category theory (Grothendieck construction), and
ordered structures (residuated lattices). 
\end{abstract}  

\maketitle

\section{Introduction}\label{intro}

The purpose of this article is to introduce and investigate a certain semigroup
construction which encompasses a range of known constructions including
transformation monoids, semigroup actions, and wreath products. We chose the
cheeky (but also tongue-in-cheek) title because our construction
is inspired by the standard way of presenting the wreath product, 
say, of groups, as a direct power $G^X$ together with a group $K$ acting on $X$,
that is, a set of bijective maps $X\to X$, indexed by
elements of $K$. For semigroups, the restriction to bijections seems artificial: after
all, semigroups are representable as semigroups of arbitrary maps. And if the
maps do not have to be surjective, there seems to be no reason for having the
same set of coordinates for every element of $K$.  

A rudimentary construction of this type has been used in~\cite{JM06}
to settle some questions about \emph{generalised BL-algebras}, which are
a subclass of certain special lattice-ordered monoids known as
\emph{residuated lattices}. For the purposes of this article, familiarity with
residuated lattices is not necessary, but the interested reader is referred
to~\cite{JT02} for a very readable albeit slightly old survey.

The construction was expanded and
investigated in~\cite{DK14}, under the name of \emph{kites}, still in the
context of residuated lattices. A series of applications and further
generalisations followed, see, e.g.,~\cite{DH14} and~\cite{BD15}.
A modification of the kite construction (to be precise, a subsemigroup of a
kite) was put to a good use in~\cite{BKLT16}. All these, however,
stayed within the area of ordered structures, and the interaction of
multiplication with order was the main focus. It was clear from the beginning
that the kite construction is closely related to wreath products of 
ordered structures, for example, from~\cite{JT04}, or for a more specific case of
lattice-ordered groups, from~\cite{HMC69}.
Considering order, however, seems
to have obscured the properties of the multiplicative structure to some extent.

Here we depart from order (in the content, not in the organisation)
and investigate only the multiplicative structure. As an application,
we will show that the decomposition of finite semigroups in the celebrated
Krohn-Rhodes Theorem (originally in~\cite{KR62}, see also~\cite{RS09}) can be
given a slightly finer form, namely, the flip-flop monoids can be replaced by
two-element semilattices.

\subsection{Notation}
We use the category-theoretic notation for composition of maps, that is, 
for maps $f\colon A\longrightarrow B$ and $g\colon B\longrightarrow C$
we denote their composition by $g\circ f\colon A\longrightarrow C$, so that
$(g\circ f)(a) = g(f(a))$ for all $a\in A$. The set of all
maps from the $A$ to $B$ we denote by the usual $B^A$. For a map
$f\colon A\longrightarrow B$ and a set $I$ we write
$f^I\colon A^I\to B^I$ for the map defined by $f^I(x)(i)=f(x(i))$.
The following easy proposition will be used repeatedly without further ado.

\begin{proposition}\label{P1}
Let $\mathbf{G} = (G;\cdot)$ be a groupoid, and let $I$, $J$ be sets. 
Then for all $x,y\in G^I$ and any $f\in I^J$ the following equality holds
$$(x\circ f)\cdot (y\circ f)= (x\cdot y)\circ f.$$
\end{proposition}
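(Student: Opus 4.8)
The plan is to verify the asserted identity coordinatewise, exploiting that the multiplication on any power $G^I$ is defined pointwise and that precomposition with $f$ merely reindexes coordinates. So first I would fix an arbitrary $j\in J$ and compute the left-hand side at $j$: by the definition of the pointwise product in $G^J$,
$$\bigl((x\circ f)\cdot(y\circ f)\bigr)(j) = (x\circ f)(j)\cdot (y\circ f)(j),$$
and then by the definition of composition this equals $x(f(j))\cdot y(f(j))$.

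Next I would compute the right-hand side at the same $j$: by the definition of composition, $\bigl((x\cdot y)\circ f\bigr)(j) = (x\cdot y)\bigl(f(j)\bigr)$, and by the definition of the pointwise product in $G^I$ this is again $x(f(j))\cdot y(f(j))$. Since $j\in J$ was arbitrary, the two maps $J\to G$ agree everywhere, which is the claim.

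There is essentially no obstacle: every step is an unwinding of a definition, and no property of $\cdot$ (associativity, commutativity, existence of units) is invoked, which is exactly why the statement is valid for an arbitrary groupoid. The only point requiring a little care is bookkeeping the two different index sets — the product on the outer left is computed in $G^J$, whereas the product $x\cdot y$ on the right is computed in $G^I$ — so I would state explicitly which power each operation lives in to avoid any ambiguity.
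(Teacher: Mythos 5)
Your proof is correct: the pointwise evaluation at an arbitrary $j\in J$, using only the definitions of composition and of the pointwise product, is exactly the argument intended, and your care in distinguishing the product in $G^J$ from the one in $G^I$ is apt. The paper states this proposition without proof as an easy fact, and your verification is the standard unwinding it tacitly relies on.
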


We will frequently use systems of parameterised maps. In order to
distinguish easily between parameters and arguments, we will put the parameters
in square brackets, so $f[a,b](x)$ will denote the value of a
map $f[a,b]$ on the argument $x$.

We will also frequently pass between algebras (semigroups), categories, and
other types of structures (systems of maps), typically related to one
another. To help distinguishing between them, we will use different fonts.
Typically, boldface will be used for algebras (and italics for their
universes), sans serif will be used for categories, and script for other types
of structures. A few exceptions to these rules will be natural enough not to
cause confusion.

\subsection{The  main construction}

Let $\mathbf{S} = (S,\cdot)$ be a semigroup, and let 
$(I[s])_{s\in S}$ be an indexed system of sets. For any $(a,b)\in S\times S$, let 
$\lambda[a,b]\colon I[ab]\to I[a]$ and $\rho[a,b]\colon I[ab]\to I[b]$
be maps satisfying the following conditions
\begin{enumerate}
\item[($\alpha$)] $\lambda[a,b]\circ\lambda[ab,c] = \lambda[a,bc]$
\item[($\beta$)] $\rho[b,c]\circ\rho[a,bc] = \rho[ab,c]$
\item[($\gamma$)] $\rho[a,b]\circ\lambda[ab,c] = \lambda[b,c]\circ\rho[a,bc]$
\end{enumerate}
which make the diagram in Figure~\ref{l-r-system} commute. 
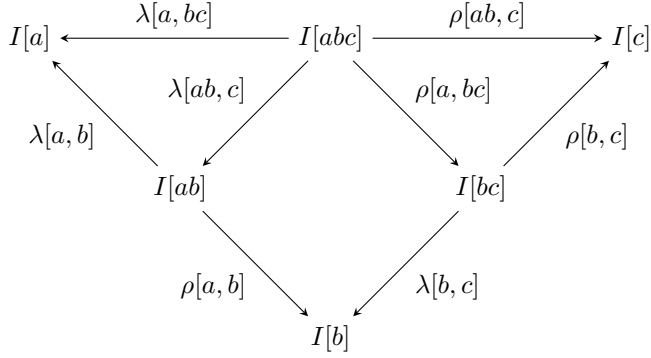
\begin{figure}
\begin{tikzpicture}[>=stealth,auto]
\node (I_abc) at (0,0) {$I[abc]$};
\node (I_a) at (-4,0) {$I[a]$};
\node (I_c) at (4,0) {$I[c]$};
\node (I_ab) at (-2,-2) {$I[ab]$};
\node (I_bc) at (2,-2) {$I[bc]$};
\node (I_b) at (0,-4) {$I[b]$};
\draw[->] (I_abc) to node[swap] {$\lambda[a,bc]$} (I_a);
\draw[->] (I_abc) to node {$\rho[ab,c]$} (I_c);
\draw[->] (I_abc) to node[swap] {$\lambda[ab,c]$} (I_ab);
\draw[->] (I_abc) to node {$\rho[a,bc]$} (I_bc);
\draw[->] (I_ab) to node {$\lambda[a,b]$} (I_a);
\draw[->] (I_ab) to node[swap] {$\rho[a,b]$} (I_b);
\draw[->] (I_bc) to node[swap] {$\rho[b,c]$} (I_c);
\draw[->] (I_bc) to node {$\lambda[b,c]$} (I_b);
\end{tikzpicture}
\caption{A $\lambda\rho$-system}
\label{l-r-system}
\end{figure}
Any triple $(\mathbf{I},\blambda,\brho)$ of systems of sets and maps
satisfying the above conditions will be called
a \emph{$\lambda\rho$-system over} $\mathbf{S}$. A \emph{(general)
  $\lambda\rho$-system} 
is then a quadruple $(\mathbf{S},\mathbf{I},\blambda,\brho)$,
where $\mathbf{S}$ is a semigroup and
$(\mathbf{I},\blambda,\brho)$ is a $\lambda\rho$-system over $\mathbf{S}$.
We will typically use script letters to refer to $\lambda\rho$-systems,
together with the convention that a $\lambda\rho$-system over a semigroup will
be referred to by the script variant of the letter naming the semigroup.
Thus, a $\lambda\rho$-system over $\mathbf{S}$ will be generally called
$\mathcal{S}$; subscripts, and occasionally other devices, will be used to
distinguish between different  
$\lambda\rho$-systems over the same semigroup.
Where convenient, we will also use a more explicit notation
$$
\bigl(\langle \lambda[a,b],\rho[a,b]\rangle\colon
I[ab]\longrightarrow I[a]\times I[b]\bigr)_{(a,b)\in S^2}
$$
for a $\lambda\rho$-system over a semigroup $\mathbf{S}$.

\begin{definition}\label{rl-prod}
Let $\mathbf{S} = (S;\cdot)$ be a semigroup and let
$\mathcal{S} = (\mathbf{I},\blambda,\brho)$
be a $\lambda\rho$-system over $\mathbf{S}$. Let  
$\mathbf{H}$ be a semigroup. Then, we define a groupoid 
$\mathbf{H}^{[\mathcal{S}]} = (H ^{[\mathcal{S}]};\star)$, by putting 
\begin{itemize}
\item $H^{[\mathcal{S}]} = \biguplus_{a\in S} H^{I[a]} = \{(x,a)\colon a\in S,\
  x\in H^{I[a]}\}$, and 
\item $(x,a)\star(y,b) = \bigl((x\circ\lambda[a,b])\cdot(y\circ\rho[a,b]),ab\bigr)$.
\end{itemize}
\end{definition}

We will call $\mathbf{H}^{[\mathcal{S}]}$ a \emph{$\lambda\rho$-product}. As the name
suggests, $\lambda\rho$-products are closely related to wreath products.
We will explore their relationship more closely in Section~\ref{wreath}.

For any $\lambda\rho$-system $\mathcal{S}$ over a semigroup $\mathbf{S}$, we
will call $\mathbf{S}$ the \emph{skeleton} of $\mathcal{S}$. We will extend this
terminology to $\lambda\rho$-products, that is, for any semigroup
$\mathbf{H}$, we will also call $\mathbf{S}$ the skeleton of
$\mathbf{H}^{[\mathcal{S}]}$.

\begin{theorem}\label{main}
Let $\mathbf{S} = (S;\cdot)$ be a semigroup and let
$$
\mathcal{S} = 
\bigl(\langle\lambda[a,b],\rho[a,b]\rangle\colon 
I[ab]\to I[a]\times I[b]\bigr)_{(a,b)\in S^2}
$$
be a system of sets and maps indexed by the elements of $S$. 
Then, the following are equivalent.
\begin{enumerate}
\item $\mathbf{H}^{[\mathcal{S}]}$ is a semigroup, for any semigroup $\mathbf{H}$.
\item $\bigl(\langle\lambda[a,b],\rho[a,b]\rangle\colon 
I[ab]\to I[a]\times I[b]\bigr)_{(a,b)\in S^2}$ is a $\lambda\rho$-system over $\mathbf{S}$.
\end{enumerate}
\end{theorem}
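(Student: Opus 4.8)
The plan is to reduce the associativity of $\star$ on $\mathbf{H}^{[\mathcal{S}]}$ to a single pointwise identity in $\mathbf{H}$, then get $(2)\Rightarrow(1)$ from that identity directly and $(1)\Rightarrow(2)$ by witnessing any violation of $(\alpha)$, $(\beta)$, $(\gamma)$ inside a suitable free semigroup. First I would fix $(x,a),(y,b),(z,c)\in\mathbf{H}^{[\mathcal{S}]}$ and expand both $((x,a)\star(y,b))\star(z,c)$ and $(x,a)\star((y,b)\star(z,c))$ using Definition~\ref{rl-prod}, with Proposition~\ref{P1} used to push $\bl\circ\lambda[ab,c]$ through the product in $(x,a)\star(y,b)$ and $\bl\circ\rho[a,bc]$ through the product in $(y,b)\star(z,c)$. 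Both products have second coordinate $abc$, so associativity in $\mathbf{S}$ disposes of that coordinate. Evaluating the first coordinates at an arbitrary $i\in I[abc]$ and dropping brackets in triple products of $H$ (legitimate, since $\mathbf{H}$ is a semigroup), the associativity equation at these data becomes
$$x(p)\cdot y(q)\cdot z(r)=x(p')\cdot y(q')\cdot z(r'),$$
where $p=(\lambda[a,b]\circ\lambda[ab,c])(i)$, $q=(\rho[a,b]\circ\lambda[ab,c])(i)$, $r=\rho[ab,c](i)$ and $p'=\lambda[a,bc](i)$, $q'=(\lambda[b,c]\circ\rho[a,bc])(i)$, $r'=(\rho[b,c]\circ\rho[a,bc])(i)$. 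The crux of the reduction is that ``$p=p'$ for all $i$'' is precisely $(\alpha)$, ``$q=q'$ for all $i$'' is precisely $(\gamma)$, and ``$r=r'$ for all $i$'' is precisely $(\beta)$.

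Granting this, $(2)\Rightarrow(1)$ is immediate: if $(\alpha)$, $(\beta)$, $(\gamma)$ hold then $p=p'$, $q=q'$, $r=r'$ for every $i$, so the displayed identity holds in every semigroup $\mathbf{H}$ for all $x,y,z$, whence $\star$ is associative. For $(1)\Rightarrow(2)$ I would argue contrapositively. Suppose one of $(\alpha)$, $(\beta)$, $(\gamma)$ fails; then there are $a,b,c\in S$ and $i\in I[abc]$ with $(p,q,r)\neq(p',q',r')$. In particular $I[abc]\neq\emptyset$, and since all the maps in sight are total, $I[a]$, $I[b]$, $I[c]$ are nonempty as well. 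Take $\mathbf{H}$ to be the free semigroup on the disjoint union $I[a]\uplus I[b]\uplus I[c]$, and let $x\colon I[a]\to H$, $y\colon I[b]\to H$, $z\colon I[c]\to H$ be the inclusions of the three summands into the free generating set; each is injective. Then $x(p)\cdot y(q)\cdot z(r)$ and $x(p')\cdot y(q')\cdot z(r')$ are words of length three, and two such words coincide only if they agree letter by letter; by injectivity of $x,y,z$ this forces $p=p'$, $q=q'$, $r=r'$, contradicting the choice of $i$. Hence $\star$ is not associative for this $\mathbf{H}$, so $(1)$ fails.

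I expect the only delicate point to be the bookkeeping in the first paragraph: one must track carefully which of the maps $\lambda[a,b]$, $\rho[a,b]$ gets precomposed with which, and line up the two expansions so that the three discrepancies between the left- and right-hand sides land exactly on the equations $(\alpha)$, $(\gamma)$, $(\beta)$ respectively; once the reduction to the displayed identity is correctly set up, both implications are short. It is worth noting that the argument for $(1)\Rightarrow(2)$ actually shows more: a single ``large enough'' free semigroup --- for instance the free semigroup on $\biguplus_{s\in S}I[s]$ --- already suffices to test condition~(1), so ``for any semigroup $\mathbf{H}$'' in item~(1) could be weakened to ``for some (indeed, for this particular) semigroup $\mathbf{H}$''.
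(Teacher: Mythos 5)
Your proposal is correct and takes essentially the same route as the paper: both expand $((x,a)\star(y,b))\star(z,c)$ and $(x,a)\star((y,b)\star(z,c))$ via Proposition~\ref{P1}, reduce associativity to a single identity whose three coordinatewise discrepancies are exactly ($\alpha$), ($\gamma$), ($\beta$), and obtain (2)$\Rightarrow$(1) immediately. The paper dismisses (1)$\Rightarrow$(2) as clear, whereas you supply the missing detail with a valid witness --- the free semigroup on $I[a]\uplus I[b]\uplus I[c]$ with $x,y,z$ the (injective) inclusions --- and your closing observation that one sufficiently large free semigroup already tests condition (1) is also correct.
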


\begin{proof} First, note that associativity of the operation
  $\star$ is equivalent to the statement that the equality
\begin{equation}\label{assoc-eq}\tag{\dag}  
\begin{split}
&\bigl((x\circ\lambda[a,b]\circ\lambda[ab,c])
   \cdot(y\circ\rho[a,b]\circ\lambda[ab,c]) 
   \cdot(z\circ\rho[ab,c]),\ abc\bigr) = \\
&\bigl((x\circ\lambda[a,bc])
   \cdot(y\circ\lambda[b,c]\circ\rho[a,bc]) 
   \cdot(z\circ\rho[a,b]\circ\rho[a,bc]),\ abc\bigr)
\end{split}
\end{equation} 
holds for arbitrary $(x,a), (y,b), (z,c)\in H^{[\mathcal{S}]}$. 
To see it, we carry out the following straightforward calculation:
\begin{align*}
\bigl((x,a)\star(y,b)\bigr)&\star(z,c) = 
\bigl((x\circ\lambda[a,b])\cdot(y\circ\rho[a,b]),\ ab\bigr)\star (z,c) \\
&= \biggl(\Bigl(\bigl((x\circ\lambda[a,b])\cdot(y\circ\rho[a,b])\bigr)
   \circ\lambda[ab,c]\Bigr) 
   \cdot\bigl(z\circ\rho[ab,c]\bigr),\ abc\biggr) \\
&= \bigl((x\circ\lambda[a,b]\circ\lambda[ab,c])
   \cdot(y\circ\rho[a,b]\circ\lambda[ab,c]) 
   \cdot(z\circ\rho[ab,c]),\ abc\bigr) \\
&= \bigl((x\circ\lambda[a,bc])
   \cdot(y\circ\lambda[b,c]\circ\rho[a,bc]) 
   \cdot(z\circ\rho[b,c]\circ\rho[a,bc]),\ abc\bigr)\\
&= \biggl(\bigl(x\circ\lambda[a,bc]\bigr)
   \cdot\Bigl(\bigl((y\circ\lambda[b,c]) 
   \cdot(z\circ\rho[b,c])\bigr)\circ\rho[a,bc]\Bigr),\ abc\biggr) \\
&= (x,a)\star
   \bigl((y\circ\lambda[b,c])\cdot(z\circ\rho[b,c]),\ bc\bigr)\\
&= (x,a)\star\bigl((y,b)\star(z,c)\bigr)  
\end{align*}
where the only non-definitional equality is precisely~(\ref{assoc-eq}).
Now, if $\mathcal{S}$ is a  $\lambda\rho$-system, then~(\ref{assoc-eq}) 
follows immediately from the equations ($\alpha$), ($\beta$) and ($\gamma$).
This proves that (2) implies (1). The converse is clear.
\end{proof}

Note that, in general, neither
$\mathbf{S}$ nor $\mathbf{H}$ is a subsemigroup of 
$\mathbf{H}^{[\mathcal{S}]}$. However, it is not difficult to show that if
either of them is a monoid then the other one is a subsemigroup of 
$\mathbf{H}^{[\mathcal{S}]}$.

\begin{example}\label{skel}
Let $\mathbf{S}$ be a semigroup, and let $\mathbf{1}$ be the trivial semigroup.
Then, for any $\lambda\rho$-system $\mathcal{S}$ over $\mathbf{S}$ we have
$\mathbf{1}^{[\mathcal{S}]}\cong \mathbf{S}$. Indeed, $\mathbf{1}^I\cong
\mathbf{1}$ for any $I$, so
$\mathbf{1}^{[\mathcal{S}]} = \bigl(\{(1,s)\colon s\in S\},\star\bigr)$, with
$(1,a)\star(1,b) = (1,ab)$.  
\end{example}

The same effect can be achieved in a more fanciful way. 

\begin{example}\label{empty}
Let $\mathbf{S}$ be a semigroup, and let $I[s] = \emptyset$ for each $s\in S$.
Then, $\mathcal{S} = (\mathbf{I},\blambda,\brho)$, where $\lambda[a,b]$,
$\rho[a,b]$ are empty functions for each $(a,b)\in S^2$, is a
$\lambda\rho$-system over $\mathbf{S}$.  
For any semigroup $\mathbf{H}$ we then have that $H^{I[s]}$ is a singleton for each
$s\in S$ \textup{(}its only element is the empty map
$\emptyset\colon \emptyset\to H$\textup{)}. 
Moreover, $(\emptyset,a)\star(\emptyset,b) = (\emptyset, ab)$, for
any $a,b\in S$, and thus $\mathbf{H}^{[\mathcal{S}]}\cong \mathbf{S}$. 
\end{example}  

In either of these ways, every semigroup $\mathbf{S}$ is isomorphic to
a $\lambda\rho$-product whose skeleton is $\mathbf{S}$. One can ask how much
freedom there is for making some, but not necessarily all, sets $I[s]$ empty. The
answer is due to Dominik Lachman~\cite{Lachman}.

\begin{proposition}
Let $\mathcal{S} = (\mathbf{I},\blambda,\brho)$ be a $\lambda\rho$-system over a
semigroup $\mathbf{S}$. Let $J = \{s\in S\colon I[s] = \emptyset\}$.
If $J$ is nonempty, then $J$ is a two-sided ideal of\/ $\mathbf{S}$.
\end{proposition}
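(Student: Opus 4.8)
The plan is to exploit the single elementary fact that there is no map from a nonempty set into the empty set, so that whenever the codomain of one of the structure maps $\lambda[a,b]$ or $\rho[a,b]$ is empty, its domain must be empty as well. The point is that a $\lambda\rho$-system furnishes, for \emph{every} pair $(a,b)\in S^2$, honest maps $\lambda[a,b]\colon I[ab]\to I[a]$ and $\rho[a,b]\colon I[ab]\to I[b]$; their mere existence is all we shall need.

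Concretely, I would fix $s\in J$ (so $I[s]=\emptyset$) and an arbitrary $t\in S$, and argue in two symmetric steps. For closure on the right, consider the map $\lambda[s,t]\colon I[st]\to I[s]$. Since $I[s]=\emptyset$ and this map exists, its domain must be empty, i.e.\ $I[st]=\emptyset$, so $st\in J$. For closure on the left, consider the map $\rho[t,s]\colon I[ts]\to I[s]$; again $I[s]=\emptyset$ forces $I[ts]=\emptyset$, hence $ts\in J$. As $J$ is assumed nonempty, this shows $J$ is a two-sided ideal of $\mathbf{S}$.

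There is essentially no obstacle here: the argument uses neither the commutativity conditions ($\alpha$), ($\beta$), ($\gamma$) nor any property of the semigroup multiplication beyond the fact that $st$ and $ts$ lie in $S$. I would likely add a one-line remark that the statement in fact holds for any system equipped merely with the maps $\lambda[a,b]$, $\rho[a,b]$ having the indicated domains and codomains, without requiring the diagram in Figure~\ref{l-r-system} to commute.
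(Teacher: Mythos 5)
Your argument is correct: since $\lambda[s,t]\colon I[st]\to I[s]$ and $\rho[t,s]\colon I[ts]\to I[s]$ exist and a nonempty set admits no map into the empty set, $I[s]=\emptyset$ forces $I[st]=I[ts]=\emptyset$, which is exactly what is needed. The paper states this proposition without proof (crediting it to Lachman), and your reasoning is the evident intended one; your closing observation that the commutativity conditions ($\alpha$), ($\beta$), ($\gamma$) play no role is also accurate.
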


\begin{example}\label{prod}
Let $\mathbf{S}$ be a semigroup, and let $I[s] = \{1\}$ for each $s\in S$.
Then, $\mathcal{S} = (\mathbf{I},\blambda,\brho)$, where $\lambda[a,b]$,
$\rho[a,b]$ are constant functions for each $(a,b)\in S^2$, is a
$\lambda\rho$-system over $\mathbf{S}$.  
Then, $H^{I[s]}$ is a copy of $H$, for any semigroup $\mathbf{H}$. 
Moreover, for 
any $a,b\in S$ and $x,y\in H$, we have 
$(x,a)\star(y,b) = (xy, ab)$,
and thus $\mathbf{H}^{[\mathcal{S}]}\cong \mathbf{H}\times\mathbf{S}$. 
\end{example}

\begin{example}\label{lzero}
Let $\mathbf{1}$ be the trivial semigroup, and let 
$I = \{0,1\}$. Next, let $\lambda\colon I\to I$ be the identity map,
and let $\rho\colon I\to I$ be the constant map $\overline{0}$.
This defines a $\lambda\rho$-system $\mathcal{I}$ over $\mathbf{1}$.
Consider $\mathbb{Z}_2^{[\mathcal{I}]}$, whose universe
$\mathbb{Z}_2^I$ we will identify in the obvious way with
the set $\{00,01,10,11\}$. Here is the multiplication table of
$\mathbb{Z}_2^{[\mathcal{I}]}$:
$$
\begin{tabular}{c|cccc}
$\star$ & $00$  & $11$ & $01$ & $10$ \\
\hline                            
$00$  & $00$  & $11$ & $00$ & $11$ \\
$11$  & $11$  & $00$ & $11$ & $00$ \\  
$01$  & $01$  & $10$ & $01$ & $10$ \\
$10$  & $10$  & $01$ & $10$ & $01$ \\
\end{tabular}  
$$
Partitioning the universe into $\{00,11\}$ and $\{01,10\}$, we
obtain a congruence $\theta$, such that 
$\mathbb{Z}_2^{[\mathcal{I}]}/\theta$ is isomorphic to the two-element left-zero semigroup.
\end{example}

\begin{example}\label{flip-flop}
Let $\mathbf{2}=(\{0,1\},\vee)$ be the two-element join-semilattice, and let
$\mathcal{Z}$ be the $\lambda\rho$-system over $\mathbf{2}$, defined by putting
\begin{enumerate}
\item $I[0]=\{0\},$ $I[1]=\{0,1\}$,
\item $\lambda[1,0]=\rho[0,1]=\lambda[1,1]=id_{I[1]}$ and
  $\rho[1,1] = \overline{0}$. 
\end{enumerate}
This defines a unique $\lambda\rho$-system, since all
the remaining maps all have $\{0\}$ as the range. It is
easy to show that the semigroup $\mathbb Z_2^{[\mathcal{Z}]}$ is the following:
$$
\begin{array}{c|cccccc}
\star &0&1&00&11&01&10\\
\hline
0&0&1&00&11&01&10\\
1&1&0&11&00&10&01\\
00&00&11&00&11&00&11\\
11&11&00&11&00&11&00\\
01&01&10&01&10&01&10\\
10&10&01&10&01&10&01
\end{array}
$$
Partitioning the universe into $\{0,1\}$, $\{00,11\}$ and $\{01,10\}$
we obtain a congruence $\theta$, such that $\mathbb Z_2^{[\mathcal{Z}]}/\theta$ is
isomorphic to the left flip-flop monoid. 
\end{example}

In the commonly used terminology, Examples~\ref{lzero} and~\ref{flip-flop} show,
respectively, that the two-element left-zero semigroup strongly divides
a $\lambda\rho$-product of $\mathbb{Z}_2$ over the trivial semigroup, and
the three-element left flip-flop monoid strongly divides 
a $\lambda\rho$-product of $\mathbb{Z}_2$ over a two-element semilattice.
Thus, the flip-flop monoid turns out to be decomposable, in this sense.
It will be shown in Section~\ref{wreath} that wreath product
is also a special case of $\lambda\rho$-product.
The next two last examples pave the way to wreath products. 

\begin{example}\label{sgrp-act}
Let a semigroup $\mathbf{S}$ act on a set $X$ on the left. The system of maps
$$
\bigl(\langle \lambda[a,b],\rho[a,b]\rangle\colon
I[ab]\to I[a]\times I[b]\bigr),
$$
where $I[s]=X$ for any $s\in S$, and 
\begin{enumerate}
\item $\lambda[a,b] = {id}_X$ for any $a,b\in S$,
\item $\rho[a,b] = \bl\cdot a$ for all $a,b\in S$. 
\end{enumerate}
is a $\lambda\rho$-system over $\mathbf{S}$. An analogous
$\lambda\rho$-system is induced by $\mathbf{S}$ acting on the right.
\end{example}

Recall that a two-sided action of a semigroup
$\mathbf{S}$ on a set $X$ is 
a pair of maps $\ld\colon S\times X\longrightarrow X$ and
$\rd\colon X\times S\longrightarrow X$, satisfying 
$$
a\ld (b\ld x) = (b\cdot a)\ld x\qquad (x\rd a)\rd b = x\rd (b\cdot a)\qquad
(a\ld x)\rd b=a\ld (x\rd b)
$$
for any $a,b\in S$  and $x\in X$. The slash notation is not the commonest, but
we use it because of the connection with residuation, to come in
Example~\ref{rl-example}.

\begin{example}\label{sgrp-act-two-sided}
Let $(X,\ld,\rd,\mathbf{S})$ consist of a set $X$ together with a two-sided
action of a semigroup $\mathbf{S}$ on $X$. Then the system of maps
$$
\bigl(\langle \lambda[a,b],\rho[a,b]\rangle\colon
I[ab]\to I[a]\times I[b]\bigr),
$$
where $I[s]=X$ for any $s\in S$, and 
\begin{enumerate}
\item $\lambda[a,b] = \bl\rd b$ for any $a,b\in S$,
\item $\rho[a,b] = a\ld\bl$ for all $a,b\in S$. 
\end{enumerate}
is a $\lambda\rho$-system over $\mathbf{S}$.
\end{example}

The next example comes from the theory of ordered structures, more precisely,
residuated lattices. We present it mainly because it is the closest to the
kite construction that motivated the present work. The reader unfamiliar with
residuated lattices can safely skip the example.

\begin{example}\label{rl-example}
Let $(L;\cdot,\ld,\rd,\wedge,\vee,e)$ be a residuated lattice. For each $x\in L$
we put 
$I[x] = \{a\in L\colon x\leq a\}$. Next, let $\lambda[a,b] = \bl\rd b$ and
$\rho[a,b] = a\ld\bl$. Then $(\mathbf{I},\blambda,\brho)$ is a
$\lambda\rho$-system over the skeleton $(L;\cdot)$. The same holds for the more
general case of a partially ordered residuated semigroup
$(L;\leq, \cdot,\ld,\rd)$.
\end{example}

The last example in this section is hardly more than a curiosity, but we find it
quite illustrative. Let $\star$ be any semigroup operation on a
two-element Boolean algebra $\mathbf{B}$, say, meet, join, projection, or
addition modulo 2. Then, for any set $X$, on the one hand
$\star$ is a pointwise operation in $\mathbf{B}^X$, but on the
other hand, it has its \emph{alter ego} in the powerset $2^X$, via
characteristic functions. Here is an analogue of this for a
$\lambda\rho$-system over $\mathbf{B}$.
\begin{example}
Let $\mathcal{S} = (\mathbf{I},\blambda,\brho)$ be any
$\lambda\rho$-system, with a skeleton $\mathbf{S}$. Let $\star$ be any semigroup
operation of the two-element Boolean algebra $\mathbf{B}$. Then, 
$\mathbf{B}^{[\mathcal{S}]}$ is a semigroup whose universe is the disjoint union
of $2^{I[x]}$ for the system  $\{I[a]\colon a\in S\}$. The semigroup operation  
can be explicitly written as
$$
(U,a)\star (W,b) = \bigl(\lambda[a,b]^{-1}(U)\star\rho[a,b]^{-1}(W), ab\bigr) 
$$
where $U\subseteq I[a]$ and $W\subseteq I[b]$.
\end{example}  
One may think of the preimages $\lambda[a,b]^{-1}(U)$ and $\rho[a,b]^{-1}(W)$
as shadows cast by $U$ and $W$ in a stack of Venn diagrams.

\section{Categorical background}\label{cats}

In this short section we use some categorical tools to show that general
$\lambda\rho$-systems form a category in a very natural way. Of itself, it does not add
anything new to the construction, it just provides a conceptualisation which
will be useful at least once in Section~\ref{simpli}, but we
believe it may also prove useful in developing the theory further.
Throughout this section $\mathsf{Cat}$ will stand for the category of all
categories (with functors as arrows). For any
category $\mathsf{C}$, we will write $\mathrm{obj}(\mathsf{C})$ for the class of
objects of $\mathsf{C}$.  

\begin{definition}\label{t-over-S}
Let $\mathcal{S} = (\mathbf{I},\blambda,\brho)$ and
$\mathcal{S}'=(\mathbf{I}',\blambda',\brho')$  be
$\lambda\rho$-systems over a semigroup $\mathbf S$. By a
\emph{slice transformation} $t$ from $\mathcal{S}$ to $\mathcal{S}'$
we mean a system of maps 
$t = (t[a]\colon I[a]\longrightarrow I'[a])_{a\in S}$
satisfying $\lambda'[ab]\circ t[ab] = t[a]\circ \lambda[a,b]$ and
$\rho'[ab]\circ t[ab]= t[b]\circ \rho[a,b]$ for all $a,b\in S$, i.e., such that
the diagrams below commute. 
$$
\begin{tikzpicture}[>=stealth,auto]
\node (tab) at (0,0) {$I[ab]$};
\node (ab) at (3,0) {$I'[ab]$};
\node (ta) at (0,-2) {$I[a]$};
\node (a) at (3,-2) {$I'[a]$};
\draw[->] (tab) to node {$t[ab]$} (ab);
\draw[->] (tab) to node[swap] {$\lambda[a,b]$} (ta);
\draw[->] (ab) to node[swap] {$\lambda'[a,b]$} (a);
\draw[->] (ta) to node {$t[a]$} (a);
\end{tikzpicture} 
\qquad
\begin{tikzpicture}[>=stealth,auto]
\node (tab) at (0,0) {$I[ab]$};
\node (ab) at (3,0) {$I'[ab]$};
\node (tb) at (0,-2) {$I[b]$};
\node (b) at (3,-2) {$I'[b]$};
\draw[->] (tab) to node {$t[ab]$} (ab);
\draw[->] (tab) to node[swap] {$\rho[a,b]$} (tb);
\draw[->] (ab) to node[swap] {$\rho'[a,b]$} (b);
\draw[->] (tb) to node {$t[b]$} (b);
\end{tikzpicture} 
$$
\end{definition}

\begin{definition}\label{cat-lr-over-S}
Let $\mathbf{S}$ be a semigroup. We define $\bm{\lambda\rho}(\mathbf{S})$ to be  
the category whose objects are $\lambda\rho$-systems over a semigroup
$\mathbf{S}$, and whose arrows are slice transformations.
\end{definition}

It is clear that $\bm{\lambda\rho}(\mathbf{S})$
is a category: composition of slice transformations is a
slice transformation and the identity arrow is a system of identity maps.
In fact, it resembles a slice category---hence the terminology---but we
will not dwell on that. Having defined the category of $\lambda\rho$-systems
over a fixed semigroup, in the next step we will upgrade this definition to general 
$\lambda\rho$-systems over arbitrary semigroups. We will do it by means of
Grothendieck construction, whose one version we will now recall.

\begin{definition}[Grothendieck construction]
Let\/ $\mathsf{C}$ be an arbitrary category, and let
$F\colon \mathsf{C}^{op}\to\mathsf{Cat}$
be a functor. Then, $\mathsf{\Gamma}(F)$ is the category defined as follows.
\begin{enumerate}
\item Objects of\/ $\mathsf{\Gamma}(F)$ are pairs of $(A,X)$ such that
$A\in \mathrm{obj}(\mathsf{C})$ and $B\in \mathrm{obj}(F(A))$. 
\item Arrows between objects $(A_1,X_1),(A_2,X_2)\in
\mathrm{obj}(\mathsf{\Gamma}(F))$ are pairs
  $(f,g)$ such that $f\colon A_2\to A_1$ is an arrow in the category
  $\mathsf{C}$ and $g\colon F(f)(X_1)\to X_2$. 
\item Having objects and arrows in $\mathsf{\Gamma}(F)$, given below:
$$
(A_1,X_1)\overset{(f_1,g_1)}\longrightarrow(A_2,X_2)
\overset{(f_2,g_2)}\longrightarrow(A_3,X_3) 
$$ 
the composition of arrows is defined by:
$$
(f_1,g_1)\circ(f_2,g_2)=(f_2\circ f_1, g_1\circ F(f_1)(g_2)).
$$
\end{enumerate}
\end{definition}

To apply Grothendieck construction to $\lambda\rho$-systems, we first show the
existence of a suitable contravariant functor from (the opposite of the category
of) semigroups to categories.

\begin{lemma}
Let\/ $\mathsf{Sg}$ be the category of semigroups
\textup{(}with homomorphisms\textup{)}.  
There exists a functor $\bm{\lambda\rho}\colon \mathsf{Sg}^{op}\to
\mathsf{Cat}$ such that $\mathbf S\mapsto \bm{\lambda\rho}(\mathbf S)$, and
for each semigroup homomorphism $f\colon \mathbf S_1\to\mathbf S_2$
we have a functor 
$$
\bm{\lambda\rho}(f)\colon \bm{\lambda\rho}(\mathbf S_2)\longrightarrow
\bm{\lambda\rho}(\mathbf S_1)
$$
such that 
\begin{enumerate}
\item If $\mathcal{S} = (\mathbf{I},\blambda,\brho)\in
\bm{\lambda\rho}(\mathbf S_2)$, then
$$
\bm{\lambda\rho}(f)(\mathcal{S}) =
(\bm{\lambda\rho}(f)\mathbf{I},\bm{\lambda\rho}(f)\blambda,
\bm{\lambda\rho}(f)\brho)
$$ 
where 
\begin{eqnarray*}
\bm{\lambda\rho}(f)\mathbf{I} &=& \bigl(I[f(x)]\bigr)_{x\in S_1},\\
\bm{\lambda\rho}(f)\blambda &=&
\bigl(\lambda[f(x),f(y)]\colon
I[f(xy)]\to I[f(x)]\bigr)_{(x,y)\in S_1\times S_1},\\
\bm{\lambda\rho}(f)\brho &=&
\bigl(\rho[f(x),f(y)]\colon
I[f(xy)]\to I[f(y)]\bigr)_{(x,y)\in S_1\times S_1}.
\end{eqnarray*}
\item For any $\lambda\rho$-systems $\mathcal{S} = (\mathbf{I},\blambda,\brho)$ and
$\mathcal{S}' = (\mathbf{I}',\blambda',\brho')$ over a semigroup
$\mathbf{S}_2$, and for any slice transformation
$t\colon \mathcal{S}\to\mathcal{S}'$, such that 
$$
t = \bigl(f[x]\colon I[x]\longrightarrow I'[x]\bigr)_{x\in S_2}
$$
we have a slice transformation
$\bm{\lambda\rho}(f)t\colon
\bm{\lambda\rho}(f)(\mathcal{S})\to \bm{\lambda\rho}(f)(\mathcal{S}')$
such that
$$
\bm{\lambda\rho}(f)t =
\bigl(t[f(x)]\colon I[f(x)]\longrightarrow I'[f(x)]\bigr)_{x\in S_1}.
$$
\end{enumerate}
\end{lemma}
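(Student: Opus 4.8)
The plan is to take the stated formulas as the \emph{definition} of $\bm{\lambda\rho}$ and then verify, in turn, that they (i) send $\lambda\rho$-systems over $\mathbf{S}_2$ to $\lambda\rho$-systems over $\mathbf{S}_1$ and slice transformations to slice transformations, (ii) make each $\bm{\lambda\rho}(f)$ a functor, and (iii) make $\mathbf{S}\mapsto\bm{\lambda\rho}(\mathbf{S})$ a contravariant functor $\mathsf{Sg}^{op}\to\mathsf{Cat}$. Each step is a routine index-chase; the only point deserving attention is that (i) uses crucially that $f$ is a homomorphism.

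First I would check that $\bm{\lambda\rho}(f)(\mathcal{S})$ is genuinely a $\lambda\rho$-system over $\mathbf{S}_1$. Writing $\tilde I[x]=I[f(x)]$, $\tilde\lambda[x,y]=\lambda[f(x),f(y)]$ and $\tilde\rho[x,y]=\rho[f(x),f(y)]$, the typing is correct: since $f(xy)=f(x)f(y)$, the map $\tilde\lambda[x,y]$ goes from $\tilde I[xy]=I[f(xy)]=I[f(x)f(y)]$ to $\tilde I[x]=I[f(x)]$, and symmetrically for $\tilde\rho$. Condition $(\alpha)$ for the tilded system, namely $\tilde\lambda[x,y]\circ\tilde\lambda[xy,z]=\tilde\lambda[x,yz]$, unfolds — using $f(xy)=f(x)f(y)$ and $f(yz)=f(y)f(z)$ — to $\lambda[f(x),f(y)]\circ\lambda[f(x)f(y),f(z)]=\lambda[f(x),f(y)f(z)]$, which is exactly $(\alpha)$ for $\mathcal{S}$ evaluated at $f(x),f(y),f(z)$. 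Conditions $(\beta)$ and $(\gamma)$ transfer verbatim in the same way. For arrows, given a slice transformation $t=(t[a])_{a\in S_2}\colon\mathcal{S}\to\mathcal{S}'$, the system $\bm{\lambda\rho}(f)t=(t[f(x)])_{x\in S_1}$ satisfies $\tilde\lambda'[x,y]\circ t[f(xy)]=t[f(x)]\circ\tilde\lambda[x,y]$ precisely because this is the defining commuting square of $t$ at the pair $(f(x),f(y))$; likewise for the $\rho$-square. So $\bm{\lambda\rho}(f)t$ is a slice transformation $\bm{\lambda\rho}(f)(\mathcal{S})\to\bm{\lambda\rho}(f)(\mathcal{S}')$.

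Next I would verify functoriality of $\bm{\lambda\rho}(f)$. Since the identity arrow on $\mathcal{S}$ in $\bm{\lambda\rho}(\mathbf{S})$ is the system of identity maps and composition of slice transformations is componentwise, the equalities $(\bm{\lambda\rho}(f)\,\mathrm{id}_{\mathcal{S}})[x]=\mathrm{id}_{I[f(x)]}$ and $(\bm{\lambda\rho}(f)(s\circ t))[x]=s[f(x)]\circ t[f(x)]=(\bm{\lambda\rho}(f)s\circ\bm{\lambda\rho}(f)t)[x]$ are immediate. Finally, for functoriality of the assignment $\mathbf{S}\mapsto\bm{\lambda\rho}(\mathbf{S})$ itself: if $f=\mathrm{id}_{\mathbf{S}}$ then $\tilde I[x]=I[x]$, $\tilde\lambda=\blambda$, $\tilde\rho=\brho$ and $t[f(x)]=t[x]$, so $\bm{\lambda\rho}(\mathrm{id}_{\mathbf{S}})$ is the identity functor on $\bm{\lambda\rho}(\mathbf{S})$; and for $f\colon\mathbf{S}_1\to\mathbf{S}_2$, $g\colon\mathbf{S}_2\to\mathbf{S}_3$ and $\mathcal{S}$ over $\mathbf{S}_3$, applying first $\bm{\lambda\rho}(g)$ and then $\bm{\lambda\rho}(f)$ re-indexes the sets and maps of $\mathcal{S}$ along $x\mapsto f(x)\mapsto g(f(x))$, which is the same as re-indexing once along $g\circ f$ — and the same holds on arrows. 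Hence $\bm{\lambda\rho}(g\circ f)=\bm{\lambda\rho}(f)\circ\bm{\lambda\rho}(g)$, the contravariance required of a functor $\mathsf{Sg}^{op}\to\mathsf{Cat}$.

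As for the main obstacle: there is none of substance — the lemma is entirely bookkeeping. The single place a hypothesis is actually used is the homomorphism property of $f$, which is what makes both the typing and the transfer of $(\alpha)$–$(\gamma)$ go through for $\bm{\lambda\rho}(f)(\mathcal{S})$; everything else is formal. The one thing to keep an eye on is the direction of arrows — $\bm{\lambda\rho}$ is contravariant, so a homomorphism $\mathbf{S}_1\to\mathbf{S}_2$ yields a functor $\bm{\lambda\rho}(\mathbf{S}_2)\to\bm{\lambda\rho}(\mathbf{S}_1)$ — but once the pointwise formulas are written down this bookkeeps itself.
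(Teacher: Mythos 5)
Your proposal is correct and follows exactly the route the paper intends: take the displayed formulas as definitions and verify, via the homomorphism property $f(xy)=f(x)f(y)$, that conditions ($\alpha$)--($\gamma$) and the slice-transformation squares transfer by re-indexing, then check functoriality componentwise. The paper omits these calculations as "tedious but straightforward," and your write-up supplies precisely those details with no substantive difference in approach.
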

\begin{proof}
The proof is a series of tedious but straightforward calculations, which we
omit. A crucial point is that since $\bm{\lambda\rho}(f)$ acts contravariantly,
$\bm{\lambda\rho}(f)\mathbf{I}$, 
$\bm{\lambda\rho}(f)\blambda$ and $\bm{\lambda\rho}(f)\brho$ are 
well defined. For the proofs that ($\alpha$), ($\beta$) and ($\gamma$) are
satisfied, and that $\bm{\lambda\rho}(f)$ behaves properly on slice transformations,
we only need the definitions, the fact that $f$ is a homomorphism, and a lot of
paper.
\end{proof}

Now we are ready to define the notion of a 
\emph{transformation} between general $\lambda\rho$-systems.
Our definition may look a little esoteric, but
it is an appropriate notion of a morphism for general
$\lambda\rho$-systems. Firstly, it is natural for an application of
Grothedieck construction, and secondly, as we will show in the next section, it
is functorial for $\lambda\rho$-products as well. Slice
transformations, defined previously, are just a particular case 
of transformations. 

\begin{definition}\label{general-t}
Let $\mathcal{S} = (\mathbf{S}, \mathbf{I}, \boldsymbol{\lambda},
\boldsymbol{\rho})$ and $\mathcal{S}' = (\mathbf{S}', \mathbf{I}', 
\boldsymbol{\lambda}', \boldsymbol{\rho}')$ be general $\lambda\rho$-systems.
Define $\mathbf{t}\colon \mathcal{S}'\to \mathcal{S}$ to be a pair
$(t,h)$ consisting  
of a homomorphism $h\colon \mathbf{S}\to \mathbf{S}'$ and 
a system of maps $t = \bigl(t[a]\colon I'[{h(a)}] \to I[a]\bigr)_{a\in S}$,
such that the diagrams below commute.
$$
\begin{tikzpicture}[>=stealth,auto]
\node (tab) at (0,0) {$I'[{h(a)h(b)}] = I'[{h(ab)}]$};
\node (ab) at (3,0) {$I[{ab}]$};
\node (ta) at (0,-2) {$I'[{h(a)}]$};
\node (a) at (3,-2) {$I[a]$};
\draw[->] (tab) to node {$t[ab]$} (ab);
\draw[->] (tab) to node[swap] {$\lambda'[{h(a),h(b)}]$} (ta);
\draw[->] (ab) to node[swap] {$\lambda[{a,b}]$} (a);
\draw[->] (ta) to node {$t[a]$} (a);
\end{tikzpicture} 
\qquad
\begin{tikzpicture}[>=stealth,auto]
\node (tab) at (0,0) {$I'[{h(a)h(b)}] = I'[{h(ab)}]$};
\node (ab) at (3,0) {$I[{ab}]$};
\node (tb) at (0,-2) {$I'[{h(a)}]$};
\node (b) at (3,-2) {$I[b]$};
\draw[->] (tab) to node {$t[ab]$} (ab);
\draw[->] (tab) to node[swap] {$\rho'[{h(a),h(b)}]$} (tb);
\draw[->] (ab) to node[swap] {$\rho[{a,b}]$} (b);
\draw[->] (tb) to node {$t[b]$} (b);
\end{tikzpicture} 
$$
Any such pair $\mathbf{t} = (t,h)$ will be called a \emph{transformation}. 
\end{definition}

\begin{rem}
Let $\mathcal{S} = (\mathbf{S}, \mathbf{I}, \boldsymbol{\lambda},
\boldsymbol{\rho})$ and $\mathcal{S}' = (\mathbf{S}', \mathbf{I}', 
\boldsymbol{\lambda}', \boldsymbol{\rho}')$ be general $\lambda\rho$-systems. 
If $\mathbf{S} = \mathbf{S}'$, then
for any transformation $\mathbf{t}\colon \mathcal{S}'\to\mathcal{S}$ with
$\mathbf{t} = (t, id_S)$, we have that $t$ is a slice transformation.  
\end{rem}  

\begin{example}\label{subsystem}
Let $\mathcal{S} = (\mathbf{S}, \mathbf{I}, \boldsymbol{\lambda},
\boldsymbol{\rho})$ be a general $\lambda\rho$-system, and\/ let $\mathbf{T}$ be a
subsemigroup of\/ $\mathbf{S}$. Let $\mathbf{I}|_T$, $\blambda|_T$ and
$\brho|_T$ be the restrictions of\/ $\mathbf{I}$, $\blambda$ and
$\brho$ to $T$. Then
$\mathcal{T} = (\mathbf{T}, \mathbf{I}|_T,\blambda|_T,\brho|_T)$
is a $\lambda\rho$-system over $\mathbf{T}$. Moreover,
$\mathbf{t}\colon \mathcal{S}\to\mathcal{T}$, defined by
taking $h\colon\mathbf{T}\to\mathbf{S}$ to be the identity embedding,
together with the system $\bigl(t[a]\colon I[h(a)] \to I[a]\bigr)_{a\in T}$,
where $h(a) = a$ and $t[a] = {id}_{I[a]}$ is obviously a transformation.
\end{example}  

If $\mathcal{S}$ and $\mathcal{T}$ are $\lambda\rho$-systems related as in
Example~\ref{subsystem}, we will call $\mathcal{T}$ a \emph{subsystem} of
$\mathcal{S}$. We will sometimes write $\mathcal{S}|_T$ for a subsystem
of $\mathcal{S}$ over a semigroup $\mathbf{T}\leq\mathbf{S}$.

\begin{rem}\label{cat-general-lr}
Applying Grothendieck construction with
$\mathsf{C} = \mathsf{Sg}$ and
$F = \bm{\lambda\rho}$, we obtain
a category $\mathsf{\Gamma}(\bm{\lambda\rho})$ of general
$\lambda\rho$-systems with transformations as arrows.
\end{rem}

\section{Simplifications}\label{simpli}

If the semigroup $\mathbf{S}$ is in fact a monoid, any $\lambda\rho$-system
constructed over $\mathbf{S}$ will contain a set $I[1]$, and maps 
$\lambda[a,1]$, $\lambda[1,a]$, $\rho[a,1]$, $\rho[1,a]$ for any $a\in S$.
It is immediate from the defining equations
($\alpha$), ($\beta$) and ($\gamma$) that 
that the maps $\rho[1,a]$ and $\lambda[a,1]$ are
commuting retractions, that is, they satisfy
\begin{itemize}  
\item $\lambda[a,1]\circ\lambda[a,1] = \lambda[a,1]$
\item $\rho[1,a]\circ\rho[1,a] = \rho[1,a]$
\item $\lambda[a,1]\circ\rho[1,a] = \rho[1,a]\circ\lambda[a,1]$
\end{itemize}
for each $a\in S$. In fact, for monoids it is 
reasonable to require something stronger, but instead of stating
it for this particular case, we will define a general preservation
requirement, whose special case will apply to monoids. 

\begin{definition}\label{P-preserving}
Let $P$ be a property of semigroups, and let
$\mathcal{S} = (\mathbf{S},\mathbf{I},\blambda,\brho)$ be a $\lambda\rho$-system.
We will say that $\mathcal{S}$ \emph{preserves} $P$
\textup{(}or, \emph{is $P$ preserving}\textup{)}, if 
for every $\mathbf{H}$, whenever $\mathbf{H}$ satisfies $P$,
so does $\mathbf{H}^{[\mathcal{S}]}$. 
\end{definition}  

Said concisely, $\mathcal{S}$ is $P$ preserving, if
$\forall\mathbf{H}\colon P(\mathbf{H})\Rightarrow P(\mathbf{H}^{[\mathcal{S}]})$.
If $P$ is the property of having a unit, then 
$\mathcal{S}$ is $P$ preserving (\emph{unit-preserving})
if and only if $\mathbf{H}^{[\mathcal{S}]}$ is a monoid, for every monoid $\mathbf{H}$.

\begin{theorem}\label{main-monoid}
Let $\mathcal{S} = (\mathbf{S},\mathbf{I},\blambda,\brho)$ be a
$\lambda\rho$-system. The following are equivalent:
\begin{enumerate}
\item $\mathcal{S}$ is unit-preserving,
\item $\mathbf{S}$ is a monoid and the maps $\lambda[a,1]$ and $\rho[1,a]$ are
identity maps on $I[a]$, for each $a\in S$.
\end{enumerate}
\end{theorem}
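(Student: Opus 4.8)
The plan is to prove the two implications separately, with the bulk of the work in showing that (1) implies (2).

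\medskip

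\noindent\textbf{(2) $\Rightarrow$ (1).} Assume $\mathbf{S}$ is a monoid with identity $1$ and that $\lambda[a,1]=\rho[1,a]=id_{I[a]}$ for all $a\in S$. Let $\mathbf{H}$ be an arbitrary monoid with identity $e$. I would exhibit an explicit unit for $\mathbf{H}^{[\mathcal{S}]}$: the natural candidate is $(\overline{e},1)$, where $\overline{e}\in H^{I[1]}$ is the constant map with value $e$. To check it is a left identity, compute $(\overline{e},1)\star(y,b) = \bigl((\overline{e}\circ\lambda[1,b])\cdot(y\circ\rho[1,b]),\,b\bigr)$. Since $\overline{e}\circ\lambda[1,b]$ is again the constant map $\overline{e}\in H^{I[b]}$, and multiplying pointwise by $e$ changes nothing, and $\rho[1,b]=id_{I[b]}$ by hypothesis, this equals $(y,b)$. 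The right-identity check is symmetric, using $\lambda[a,1]=id_{I[a]}$ and that $\overline{e}\circ\rho[a,1]$ is constant $\overline{e}\in H^{I[a]}$. Hence $\mathbf{H}^{[\mathcal{S}]}$ is a monoid, so $\mathcal{S}$ is unit-preserving.

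\medskip

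\noindent\textbf{(1) $\Rightarrow$ (2).} This is the substantive direction. Assume $\mathcal{S}$ is unit-preserving.

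First I would show $\mathbf{S}$ is a monoid. Since $\mathcal{S}$ is unit-preserving, $\mathbf{H}^{[\mathcal{S}]}$ is a monoid for every monoid $\mathbf{H}$; pick $\mathbf{H}$ to be the trivial monoid $\mathbf{1}$, so that by Example~\ref{empty} (or Example~\ref{skel}) $\mathbf{1}^{[\mathcal{S}]}\cong\mathbf{S}$. A semigroup isomorphic to a monoid is a monoid, so $\mathbf{S}$ has an identity element $1$.

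Next, fix a monoid $\mathbf{H}$ with at least two elements (say $\mathbb{Z}_2$, or any nontrivial group) and let $\mathbf{u}=(u,c)$ be the identity of $\mathbf{H}^{[\mathcal{S}]}$. Applying the projection $\mathbf{H}^{[\mathcal{S}]}\to\mathbf{S}$, $(x,a)\mapsto a$ — which is a homomorphism — we see $c$ must be a left and right identity of $\mathbf{S}$, so $c=1$ and $\mathbf{u}=(u,1)$ with $u\in H^{I[1]}$. Now I would extract the two required facts by testing $\mathbf{u}$ against well-chosen elements. For any $a\in S$ and any $y\in H^{I[a]}$, the identity law $\mathbf{u}\star(y,a)=(y,a)$ unfolds to
$$
(u\circ\lambda[1,a])\cdot(y\circ\rho[1,a]) = y \quad\text{in } H^{I[a]}.
$$
Taking $y=\overline{e}$ (constant $e$) forces $u\circ\lambda[1,a]=\overline{e}$ as well, since the left factor times $\overline{e}$ must equal $\overline{e}$ and $H$ has cancellation enough in this instance (here it is cleanest to just take $\mathbf{H}$ a nontrivial group, so the equation $v\cdot\overline{e}=\overline{e}$ in $H^{I[a]}$ forces $v=\overline{e}$ pointwise). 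Feeding this back, the identity law becomes $\overline{e}\cdot(y\circ\rho[1,a])=y$, i.e. $y\circ\rho[1,a]=y$ for all $y\in H^{I[a]}$; since $H$ has more than one element, evaluating at a point shows $\rho[1,a]$ must be the identity map on $I[a]$. Symmetrically, the law $(y,a)\star\mathbf{u}=(y,a)$ gives $\lambda[a,1]=id_{I[a]}$. This establishes (2).

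\medskip

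\noindent\textbf{Main obstacle.} The only delicate point is the middle step of (1) $\Rightarrow$ (2): correctly using the pointwise structure of $H^{I[a]}$ to pass from an equation about products of functions to pointwise conclusions, and choosing $\mathbf{H}$ so that this passage is valid (a nontrivial \emph{group} makes the cancellation transparent; one should double-check that unit-preservation against that particular $\mathbf{H}$ suffices, which it does since groups are monoids). A secondary subtlety is confirming that the component $c$ of the unit is forced to be $1$ rather than merely congruent to it — but this is immediate from functoriality of the skeleton projection $(x,a)\mapsto a$. Everything else is bookkeeping with the definition of $\star$ and the hypotheses.
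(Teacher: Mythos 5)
Your proof is correct and follows essentially the same route as the paper's: reduce to the trivial monoid to get that $\mathbf{S}$ is a monoid, pin the skeleton component of the unit to $1$, test the unit law against constant maps to force $u\circ\lambda[1,a]$ constant $e$ and then $y\circ\rho[1,a]=y$ for all $y$, and conclude by nontriviality of $\mathbf{H}$, with the converse exhibited by the explicit unit $(\overline{e},1)$. The only cosmetic difference is that you invoke a group for cancellation where none is needed (the factor is the constant identity map, so $v\cdot\overline{e}=v$ in any monoid), and you identify the skeleton component via the projection homomorphism rather than the paper's direct computation $b\cdot 1=1$.
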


\begin{proof}
Assume $\mathcal{S}$ is unit-preserving. Then, in particular,
$\mathbf{1}^{[\mathcal{S}]}$ is a monoid, so since
$\mathbf{1}^{[\mathcal{S}]}\cong \mathbf{S}$ (see Example~\ref{skel}), we get
that $\mathbf{S}$ is a monoid. Next, consider 
$\mathbf{H}^{[\mathcal{S}]}$ for an arbitrary nontrivial monoid $\mathbf{H}$.
By assumption, $\mathbf{H}^{[\mathcal{S}]}$ is a monoid, so let
$(y,b)$ be its unit element. In particular, $(y,b)\star(x,1) = (x,1)$
for any $x\in H^{I[1]}$, which implies $b\cdot 1 = 1$, so
$b = 1$. Next, taking $(1,a)$ for any $a\in S$ (where $1$ is the map from $I[a]$
to $H$ identically equal to $1$), we have
$(y,1)\star(1,a) = (1,a)$, from which we get
\begin{align*}
(1,a) &= \bigl((y\circ\lambda[1,a])\cdot(1\circ\rho[1,a])\ ,a\bigr)\\
      &= \bigl((y\circ\lambda[1,a])\cdot 1,\ a\bigr)\\
      &= (y\circ\lambda[1,a],\ a).
\end{align*}
This implies that $y\circ\lambda[1,a]$ is also identically $1$.
Thus, finally, we obtain
\begin{align*}
(x,a) &= (y,1)\star(1,a)\\
      &= \bigl((y\circ\lambda[1,a])\cdot(x\circ\rho[1,a])\ ,a\bigr)\\
      &= \bigl(1\cdot(x\circ\rho[1,a]),\ a\bigr)\\
      &= (x\circ\rho[1,a],\ a),
\end{align*}
and therefore $x = x\circ\rho[1,a]$. Since this holds for an arbitrary $x$,
$\rho[1,a] = {id}_{I[a]}$ as required. By symmetry, the same holds for
$\lambda[a,1]$, finishing the proof of (1) $\Rightarrow$ (2). 
  
For the converse, let $\mathbf{H}$ be any monoid.
Since $\mathbf{S}$ is a monoid, the set $I[1]$ exists; since $\mathbf{H}$ is a
monoid, the constant function $1$ belongs to $H^{I[1]}$. 
Then, we have 
\begin{align*}
(1,1)\star (x,1) &= \bigl((1\circ\lambda[1,a])\cdot (x\circ\rho[1,a]),\ 1\cdot 1\bigr)\\
&= (1\cdot (x\circ {id}_{I[a]}),\ 1\bigr)\\
&= (x, 1)
\end{align*}
showing that $(1,1)\in H^{I[1]}$ is a left unit. A completely symmetric argument
shows that it is a right unit as well. 
\end{proof}

If a $\lambda\rho$-system satisfies conditions of Theorem~\ref{main-monoid}(2),
we will call it \emph{unital}. This piece of terminology is, strictly speaking,
redundant, but we find it conceptually useful as a name for an intrinsic
characterisation of being unit-preserving.
Note that the $\lambda\rho$-system of Example~\ref{lzero} is not unital, but
the one of Example~\ref{flip-flop} is.

We will now make a series of observations that will eventually lead to
a simplification of $\lambda\rho$-systems to a singly indexed 
version, at a cost of some relatively mild assumptions.

Before we approach it, we
state two lemmas, which are rather obvious but they make
the transition between $\lambda\rho$-systems over semigroups and monoids
smooth. As usual, we will write $\mathbf{S}^1$ for the semigroup $\mathbf{S}$
with the unit element $1$ adjoined. We adopt the convention that
$1$ is always a new element, even if $\mathbf{S}$ already has a unit. 

\begin{definition}\label{add-unit} 
Let $\mathcal{S}$ be a $\lambda\rho$-system over a semigroup $\mathbf{S}$.
Let $1$ be an element not in $S$. Define $\mathcal{S}^1$ to be the following
system of maps
$$ 
\bigl(\langle\lambda[a,b],\rho[a,b]\rangle\colon 
I[ab]\to I[a]\times I[b]\bigr)_{(a,b)\in S^1\times S^1}
$$
where $I[1] = \{1\}$, the maps $\lambda[1,a]$, $\rho[a,1]$ are 
constant maps from $I[a]$ to $I[1]$, and the maps
$\lambda[a,1]$, $\rho[1,a]$ are both equal to ${id}_{I[a]}$.
\end{definition}

The $\lambda\rho$-system $\mathcal{S}^1$ defined above will be called
the \emph{unital extension} of $\mathcal{S}$. 
The terminology is justified by the next two lemmas, whose proofs are immediate.
An illustration of their application is provided by Examples~\ref{lzero}
and~\ref{flip-flop} in Section~\ref{intro}.

\begin{lemma}\label{unit-added} 
Let $\mathcal{S}$ be a $\lambda\rho$-system over a semigroup $\mathbf{S}$. 
The system $\mathcal{S}^1$ defined above is a unital $\lambda\rho$-system over
$\mathbf{S}^1$.  Moreover, $\mathcal{S}$ is a subsystem of $\mathcal{S}^1$. 
\end{lemma}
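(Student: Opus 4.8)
The plan is to verify three things in turn: that $\mathcal{S}^1$ is a $\lambda\rho$-system over $\mathbf{S}^1$, that it is unital, and that $\mathcal{S}$ sits inside it as a subsystem in the sense of Example~\ref{subsystem}. Since Lemma~\ref{unit-added} is billed as having an immediate proof, the right strategy is a careful but low-effort case analysis rather than any clever idea.

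First I would check conditions $(\alpha)$, $(\beta)$, $(\gamma)$ for $\mathcal{S}^1$. Given a triple $(a,b,c)\in (S^1)^3$, either all three lie in $S$ — in which case the required equations are exactly those already known to hold in $\mathcal{S}$ — or at least one of them equals the new unit $1$. In the latter case one splits according to which of $a$, $b$, $c$ equals $1$ (seven subcases, but they pair up by the $\lambda$-$\rho$ symmetry). Whenever an argument is $1$, the corresponding index set is the singleton $I[1]=\{1\}$, so any map with codomain $I[1]$ is the unique constant, and any map $\lambda[a,1]$ or $\rho[1,a]$ is $\mathrm{id}_{I[a]}$ by definition. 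Each subcase then collapses: for instance with $b=1$ the equation $(\alpha)$, $\lambda[a,1]\circ\lambda[a,1]=\lambda[a,1]$, is just idempotence of an identity map, and the mixed equation $(\gamma)$ becomes an equality between two constant maps into $I[1]$, hence trivially true. The only mild care needed is to track that e.g. $I[a\cdot 1]=I[a]$ under our convention, so the domains and codomains actually match up.

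Second, unitality is immediate from the construction: the definition of $\mathcal{S}^1$ stipulates $I[1]=\{1\}$ and $\lambda[a,1]=\rho[1,a]=\mathrm{id}_{I[a]}$ for all $a$, which is precisely condition (2) of Theorem~\ref{main-monoid}, and $\mathbf{S}^1$ is a monoid by construction; so $\mathcal{S}^1$ is unital. Third, that $\mathcal{S}$ is a subsystem of $\mathcal{S}^1$ is the observation that $\mathbf{S}$ is a subsemigroup of $\mathbf{S}^1$ and that restricting $\mathbf{I}$, $\blambda$, $\brho$ of $\mathcal{S}^1$ to $S\subseteq S^1$ returns exactly $\mathcal{S}$ — true because on indices in $S$ we defined nothing new. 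Invoking Example~\ref{subsystem} then gives both the subsystem relation and the accompanying transformation for free.

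I do not expect a genuine obstacle here; the only thing to be vigilant about is bookkeeping of the degenerate index sets and the convention that $1$ is a fresh element even when $\mathbf{S}$ already has an identity (so that $1$ never coincides with an old element and the case split stays clean). A reader wanting full detail should simply enumerate the subcases where one or more of $a,b,c$ is $1$ and observe that each reduces either to idempotence of an identity map, to commutativity of an identity map with itself or with another map, or to the uniqueness of maps into a one-point set; I would state this and omit the enumeration.
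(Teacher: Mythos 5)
Your proposal is correct and matches the paper's (omitted) argument: the paper simply declares the proof immediate, and your case analysis — every equation among $(\alpha)$, $(\beta)$, $(\gamma)$ involving the fresh unit reduces either to composing with an identity map or to the uniqueness of maps into the singleton $I[1]$, while unitality and the subsystem claim hold by construction via Theorem~\ref{main-monoid}(2) and Example~\ref{subsystem} — is exactly that routine verification. One harmless slip: with only $b=1$ the instance of $(\alpha)$ is $\lambda[a,1]\circ\lambda[a,c]=\lambda[a,c]$; your displayed equation $\lambda[a,1]\circ\lambda[a,1]=\lambda[a,1]$ is the subcase $b=c=1$, but this does not affect the argument.
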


\begin{lemma}\label{lr-prod-with-unit}
Let $\mathcal{S}$ and $\mathcal{S}^1$ be $\lambda\rho$-systems defined above.
Let $\mathbf{H}$ be a semigroup. Then, $\mathbf{H}^{[\mathcal{S}]}$ is a
subsemigroup of  $\mathbf{H}^{[\mathcal{S}^1]}$.
\end{lemma}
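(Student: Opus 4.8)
The claim is that $\mathbf{H}^{[\mathcal{S}]}$ sits inside $\mathbf{H}^{[\mathcal{S}^1]}$ as a subsemigroup. The underlying set of $\mathbf{H}^{[\mathcal{S}^1]}$ is $\biguplus_{a\in S^1} H^{I[a]}$, which is exactly $\biguplus_{a\in S} H^{I[a]}$ together with the single extra slice $H^{I[1]} = H^{\{1\}}$ over the adjoined unit. So the first step is the obvious one: identify $H^{[\mathcal{S}]}$ with the subset $\{(x,a) : a\in S,\ x\in H^{I[a]}\}$ of $H^{[\mathcal{S}^1]}$, which is just "forget the new slice."

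**Key steps.** First I would note that by Lemma~\ref{unit-added}, $\mathcal{S}^1$ is genuinely a $\lambda\rho$-system over $\mathbf{S}^1$, so by Theorem~\ref{main} the groupoid $\mathbf{H}^{[\mathcal{S}^1]}$ is in fact a semigroup, and likewise $\mathbf{H}^{[\mathcal{S}]}$ is a semigroup; this is what makes "subsemigroup" a meaningful assertion. Second, and this is the only thing with any content, I would check closure: for $a,b\in S$ we have $ab\in S\subseteq S^1$ (since $S$ is a subsemigroup of $S^1$, the product of two old elements is old and never equals the new $1$), and the maps $\lambda[a,b]$, $\rho[a,b]$ used in computing $(x,a)\star(y,b)$ inside $\mathbf{H}^{[\mathcal{S}^1]}$ are, by the construction in Definition~\ref{add-unit}, literally the same maps as in $\mathcal{S}$ — Definition~\ref{add-unit} only \emph{adds} maps involving the index $1$, it does not alter any $\lambda[a,b]$ or $\rho[a,b]$ with $a,b\in S$. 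Hence $(x,a)\star(y,b) = \bigl((x\circ\lambda[a,b])\cdot(y\circ\rho[a,b]),\,ab\bigr)$ lies in $\{(z,c):c\in S\}$, and the value of the product does not depend on which of the two semigroups we compute it in. That is exactly what it means for $\mathbf{H}^{[\mathcal{S}]}$ to be a subsemigroup of $\mathbf{H}^{[\mathcal{S}^1]}$.

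**The main obstacle.** Honestly, there isn't one — this is the "proofs are immediate" lemma the authors flagged, and the only thing to be careful about is bookkeeping: making sure $S$ really is a subsemigroup of $S^1$ (true because $1$ is adjoined as a fresh element and the new element is absorbing for nothing, it is a two-sided unit, so old$\cdot$old $=$ old), and making sure Definition~\ref{add-unit} leaves the "old" structure maps untouched. If one wanted a slicker phrasing, one could instead invoke Example~\ref{subsystem}: $\mathcal{S}$ is a subsystem of $\mathcal{S}^1$ (the second sentence of Lemma~\ref{unit-added}), and then argue more generally that whenever $\mathcal{T}$ is a subsystem of $\mathcal{S}'$ over $\mathbf{T}\leq\mathbf{S}'$, the inclusion $H^{[\mathcal{T}]}\hookrightarrow H^{[\mathcal{S}']}$ is a semigroup embedding — the same closure-and-agreement-of-products check as above, carried out once at the level of arbitrary subsystems. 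I would probably present the direct argument, since the general statement about subsystems is itself just this computation and is not needed elsewhere in the excerpt.
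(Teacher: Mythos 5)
Your argument is correct and is exactly the ``immediate'' verification the paper has in mind: identify $H^{[\mathcal{S}]}$ with the slices over $S\subseteq S^1$, note that Definition~\ref{add-unit} leaves the maps $\lambda[a,b]$, $\rho[a,b]$ with $a,b\in S$ untouched and that $ab\in S$, so the subset is closed and the products agree. The paper offers no further detail (it declares the proof immediate), so your write-up matches its intended route.
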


For any unital $\lambda\rho$-system, in particular for $\mathcal{S}^1$, we can
simplify the definitions of 
the systems of maps $\lambda$ and $\rho$ by removing the double indexing. First,
defining $\lambda[a] = \lambda[1,a]$ and $\rho[a] = \rho[a,1]$, we split
the diagram in Figure~\ref{l-r-system} into three parts, substituting 
$1$ for $a$ in the left part, for $b$ in the middle part, and for $c$ in the
right part, as in Figure~\ref{split-l-r-system}. One should think about these
diagrams as three separate copies of the diagram from
Figure~\ref{l-r-system}, in which certain inessential fragments were 
suppressed. For example, in the left part of the diagram the maps 
$\rho[1,b]\colon I[b]\to I[b]$ and $\rho[1,bc]\colon I[bc]\to I[bc]$ are 
omitted, and so is the map $\rho[b,c]\colon I[bc]\to I[c]$. 
\begin{figure}
\begin{tikzpicture}[>=stealth,auto]
\node (I_abc_l) at (-1,0) {$I[bc]$};
\node (I_a) at (-5,0) {$I[1]$};
\node (I_ab_l) at (-3,-2) {$I[b]$};
\node (I_abc) at (0,-1) {$I[ac]$};
\node (I_ab) at (-2,-3) {$I[a]$};
\node (I_bc) at (2,-3) {$I[c]$};
\node (I_b) at (0,-5) {$I[1]$};
\node (I_abc_r) at (1,0) {$I[ab]$};
\node (I_c) at (5,0) {$I[1]$};
\node (I_bc_r) at (3,-2) {$I[b]$};
\draw[->] (I_abc_l) to node[swap] {$\lambda[bc]$} (I_a);
\draw[->] (I_abc_r) to node {$\rho[ab]$} (I_c);
\draw[->] (I_abc) to node[swap] {$\lambda[a,c]$} (I_ab);
\draw[->] (I_abc) to node {$\rho[a,c]$} (I_bc);
\draw[->] (I_abc_l) to node[swap] {$\lambda[b,c]$} (I_ab_l);
\draw[->] (I_abc_r) to node {$\rho[a,b]$} (I_bc_r);
\draw[->] (I_ab_l) to node {$\lambda[b]$} (I_a);
\draw[->] (I_ab) to node[swap] {$\rho[a]$} (I_b);
\draw[->] (I_bc_r) to node[swap] {$\rho[b]$} (I_c);
\draw[->] (I_bc) to node {$\lambda[c]$} (I_b);
\end{tikzpicture}
\caption{A $\lambda\rho$-system, split into three parts}
\label{split-l-r-system}
\end{figure}
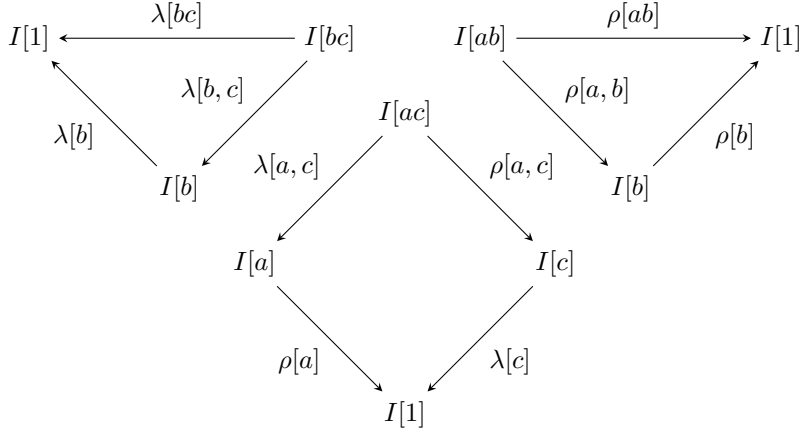

Next, renaming $a$, $b$ and $c$ as necessary, and putting the diagrams from 
Figure~\ref{split-l-r-system} together, 
we obtain the diagram of Figure~\ref{pre-l-r-system}, where the dashed lines
denote existence requirements, as usual, and where  
we write $I$ for $I[1]$.
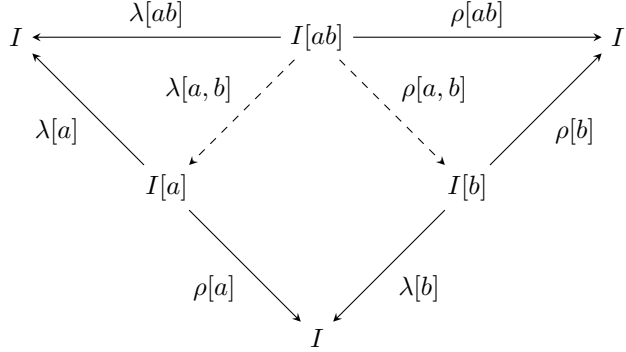
\begin{figure}
\begin{tikzpicture}[>=stealth,auto]
\node (I_abc) at (0,0) {$I[ab]$};
\node (I_a) at (-4,0) {$I$};
\node (I_c) at (4,0) {$I$};
\node (I_ab) at (-2,-2) {$I[a]$};
\node (I_bc) at (2,-2) {$I[b]$};
\node (I_b) at (0,-4) {$I$};
\draw[->] (I_abc) to node[swap] {$\lambda[ab]$} (I_a);
\draw[->] (I_abc) to node {$\rho[ab]$} (I_c);
\draw[dashed,->] (I_abc) to node[swap] {$\lambda[a,b]$} (I_ab);
\draw[dashed,->] (I_abc) to node {$\rho[a,b]$} (I_bc);
\draw[->] (I_ab) to node {$\lambda[a]$} (I_a);
\draw[->] (I_ab) to node[swap] {$\rho[a]$} (I_b);
\draw[->] (I_bc) to node[swap] {$\rho[b]$} (I_c);
\draw[->] (I_bc) to node {$\lambda[b]$} (I_b);
\end{tikzpicture}
\caption{A pre-$\lambda\rho$-system}
\label{pre-l-r-system}
\end{figure}

Now, let $\bigl(\langle\lambda[a],\rho[a]\rangle\colon I[a]\to I\bigr)_{a\in S^1}$
be a system of maps
and sets (with $I = I[1]$) such that for every $(a,b)\in S^1\times S^1$ there
exist maps 
$\lambda[a,b]$ and $\rho[a,b]$ making the required diagrams commute, that is,
satisfying the obvious counterparts of the
identities ($\alpha$), ($\beta$), and ($\gamma$), below. 
\begin{enumerate}
\item[($\alpha'$)] $\lambda[a]\circ\lambda[a,b] = \lambda[ab]$
\item[($\beta'$)] $\rho[b]\circ\rho[a,b] = \rho[ab]$
\item[($\gamma'$)] $\rho[a]\circ\lambda[a,b] = \lambda[b]\circ\rho[a,b]$
\end{enumerate}
We will refer to any such system by a rather unimaginative name of 
\emph{pre-$\lambda\rho$-system}. The rationale for the prefix `pre' will be
given shortly, but before that, let us make a few more observations. 
Given a pre-$\lambda\rho$-system, we can always define
$P[ab] = \{(x,y)\in I[a]\times I[b]\colon \rho[a](x) = \lambda[b](y)\}$, 
and obtain a pullback diagram 
$$
\begin{tikzpicture}[>=stealth,auto]
\node (P_ab) at (0,0) {$P[ab]$};
\node (I_ab) at (-3,0) {$I[ab]$};  
\node (I_a) at (2,2) {$I[a]$};
\node (I_b) at (2,-2) {$I[b]$};
\node (I) at (4,0) {$I$};
\draw[->] (P_ab) to node {$\pi[a]$} (I_a);
\draw[->] (P_ab) to node[swap] {$\pi[b]$} (I_b);
\draw[->] (I_b) to node[swap] {$\lambda[b]$} (I);
\draw[->] (I_a) to node {$\rho[a]$} (I);
\draw[bend left,->] (I_ab) to node {$\rho[a,b]$} (I_a);
\draw[bend right,->] (I_ab) to node[swap] {$\lambda[a,b]$} (I_b);
\draw[->,dashed] (I_ab) to node[swap] {$f[{a,b}]$} (P_ab);
\end{tikzpicture} 
$$
where $\pi[a]$ and $\pi[b]$ are projections.
Thus, from any pre-$\lambda\rho$-system we can obtain a `finest'
one  by systematically replacing $I[ab]$ by $P[ab]$, and factoring the
maps $\lambda$ and $\rho$ through. As this is not an
essential issue, we will not go into details.

\begin{lemma}\label{way-down}
Let $\mathbf{M}$ be a monoid, and let  
$$
\mathcal{S} = \bigl(\langle\lambda[a,b],\rho[a,b]\rangle\colon 
I[ab]\to I[a]\times I[b]\bigr)_{(a,b)\in M^2}
$$
be a $\lambda\rho$-system. Then, 
$$
\mathcal{P} = (\lambda[a],\rho[a]\colon I[a]\to I)_{a\in M}
$$
where $\lambda[a] = \lambda[1,a]$ and $\rho[a] = \rho[a,1]$
is a pre-$\lambda\rho$-system.
\end{lemma}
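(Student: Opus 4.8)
The statement asserts that the singly-indexed system obtained by setting $\lambda[a] = \lambda[1,a]$ and $\rho[a] = \rho[a,1]$ satisfies the primed conditions $(\alpha')$, $(\beta')$, $(\gamma')$. I would prove this by deriving each primed identity directly from the unprimed identities $(\alpha)$, $(\beta)$, $(\gamma)$, specialising the three free variables to include the unit $1$ of $\mathbf{M}$ in the right places. The key observation — which is exactly what makes the prefix ``pre'' legitimate — is that for a pre-$\lambda\rho$-system we only need the \emph{existence} of maps $\lambda[a,b]$, $\rho[a,b]$ making the diagram of Figure~\ref{pre-l-r-system} commute; and here those witnesses are handed to us for free, namely the original $\lambda[a,b]$, $\rho[a,b]$ from $\mathcal{S}$.

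First I would verify $(\alpha')$: $\lambda[a] \circ \lambda[a,b] = \lambda[ab]$. Take $(\alpha)$ in the form $\lambda[a,b] \circ \lambda[ab,c] = \lambda[a,bc]$ and substitute $a := 1$, $b := a$, $c := b$. Using $1 \cdot a = a$ this reads $\lambda[1,a] \circ \lambda[a,b] = \lambda[1,ab]$, i.e. $\lambda[a] \circ \lambda[a,b] = \lambda[ab]$, as wanted. Symmetrically, for $(\beta')$ take $(\beta)$, namely $\rho[b,c] \circ \rho[a,bc] = \rho[ab,c]$, and substitute $a := a$, $b := b$, $c := 1$; using $b \cdot 1 = b$ and $ab \cdot 1 = ab$ this becomes $\rho[b,1] \circ \rho[a,b] = \rho[ab,1]$, i.e. $\rho[b] \circ \rho[a,b] = \rho[ab]$. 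For $(\gamma')$, take $(\gamma)$, namely $\rho[a,b] \circ \lambda[ab,c] = \lambda[b,c] \circ \rho[a,bc]$; here I want $\rho[a] \circ \lambda[a,b] = \lambda[b] \circ \rho[a,b]$, so I substitute so that the outer maps on each side collapse to the singly-indexed ones: setting $a := a$, $b := 1$, $c := b$ gives, using $a \cdot 1 = a$ and $1 \cdot b = b$, the identity $\rho[a,1] \circ \lambda[a,b] = \lambda[1,b] \circ \rho[a,b]$, i.e. $\rho[a] \circ \lambda[a,b] = \lambda[b] \circ \rho[a,b]$, exactly $(\gamma')$.

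That is essentially the whole argument — one instance of each defining equation, with the unit slotted in. There is no real obstacle; the only thing to be careful about is bookkeeping with the indices, in particular confirming in each case that after the substitution the composite domains and codomains match up (e.g. in $(\alpha')$ one checks $\lambda[a,b]\colon I[ab]\to I[b]$? no — $\lambda[1,a]$ has domain $I[1\cdot a]=I[a]$ and $\lambda[a,b]$ has codomain $I[a]$, so the composition $\lambda[1,a]\circ\lambda[a,b]\colon I[ab]\to I[1]=I$ is well-typed and equals $\lambda[1,ab]\colon I[ab]\to I$). Since $\mathcal{S}$ is assumed to be a genuine $\lambda\rho$-system over the monoid $\mathbf{M}$, all the needed instances of $(\alpha)$, $(\beta)$, $(\gamma)$ are available, and the maps $\lambda[a,b]$, $\rho[a,b]$ themselves serve as the existence witnesses required in the definition of pre-$\lambda\rho$-system. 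Hence $\mathcal{P}$ is a pre-$\lambda\rho$-system. I would present this as three short displayed computations, each a single substitution, with a one-line remark that the original doubly-indexed maps witness the existence clause.
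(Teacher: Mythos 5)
Your proof is correct: each primed identity is obtained by a single substitution of the unit into ($\alpha$), ($\beta$), ($\gamma$) respectively (using that $\lambda[1,a]$ and $\rho[a,1]$ have the right domains and codomain $I=I[1]$), and the original doubly-indexed maps of $\mathcal{S}$ serve as the witnesses required by the existence clause in the definition of a pre-$\lambda\rho$-system. The paper dismisses this as ``Obvious'', and your computation is precisely the routine verification it has in mind.
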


\begin{proof}
Obvious.
\end{proof}

Going the other way is not completely trivial, but it does work under certain
natural conditions. Let  
$\mathcal{P} =
\bigl(\langle\lambda[a],\rho[a]\rangle\colon I[a]\to I\bigr)_{a\in M}$
be a pre-$\lambda\rho$-system over some monoid $\mathbf{M}$. We say that 
$\mathcal{P}$ \emph{has natural solutions}, if the maps 
$\lambda[a,b]$, $\rho[a,b]$ satisfying the equations 
($\alpha'$), ($\beta'$), and ($\gamma'$), also satisfy four  
cancellativity properties, namely
\begin{enumerate}
\item[($\delta_1$)] $\lambda[a]\circ\lambda[a,b]\circ\lambda[ab,c] = 
\lambda[a]\circ\lambda[a,bc] \Rightarrow 
\lambda[a,b]\circ\lambda[ab,c] = \lambda[a,bc]$,
\item[($\delta_2$)] $\rho[b]\circ \rho[b,c]\circ\rho[a,bc] = 
\rho[b]\circ\rho[ab,c] \Rightarrow
\rho[b,c]\circ\rho[a,bc] = \rho[ab,c]$,
\item[($\delta_3$)] $\rho[b]\circ \rho[a,b]\circ\lambda[ab,c] =
\rho[b]\circ\lambda[b,c]\circ\rho[a,bc] \Rightarrow 
\rho[a,b]\circ\lambda[ab,c] = \lambda[b,c]\circ\rho[a,bc]$,
\item[($\delta_4$)] 
$\lambda[b]\circ\rho[a,b]\circ\lambda[ab,c] =
\lambda[b]\circ\lambda[b,c]\circ\rho[a,bc] \Rightarrow
\rho[a,b]\circ\lambda[ab,c] = \lambda[b,c]\circ\rho[a,bc]$.
\end{enumerate} 
These conditions may look somewhat esoteric, but they are just  
`prefixed' versions of ($\alpha$), ($\beta$) and ($\gamma$). Observe that 
the left-hand sides of ($\delta_i$) for $i=1,2,3,4$ hold in any 
$\lambda\rho$-system (with $\lambda[a] = \lambda[1,a]$ and 
$\rho[b] = \rho[b,1]$). Diagrammatically, they are presented in 
Figure~\ref{ns-pre-l-r-system}, where all possible commutations are postulated. 
We are not sure what the diagram resembles more: a parachute or a jellyfish. But
we promise we will not have any more complicated diagrams in the article. 

Importantly, the conditions above hold in three rather important cases: 
(a) when $\lambda[a]$ and $\rho[a]$ are injective maps, (b) when $\lambda[a]$
and $\rho[a]$ 
are actions of a semigroup on itself by left and right multiplication, and (c) 
in the construction over a free monoid, which will be given in
Subsection~\ref{free-constr}.  
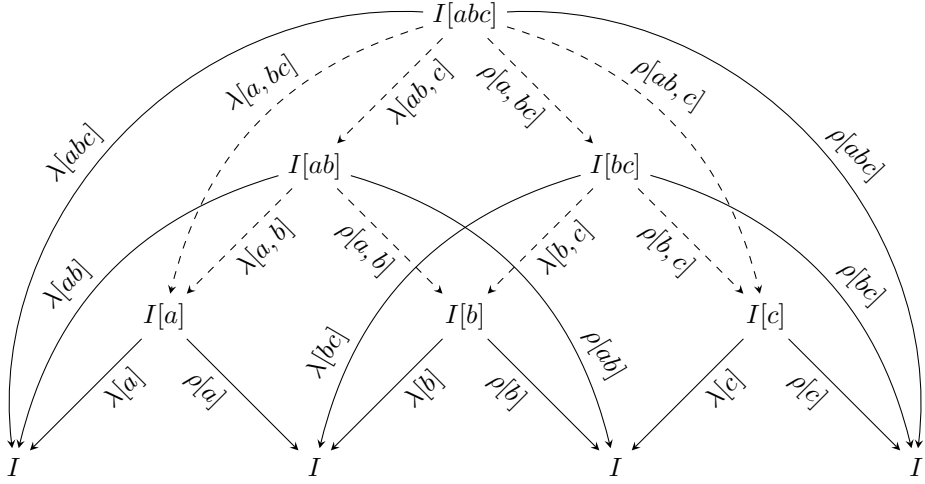
\begin{figure}
\begin{tikzpicture}[>=stealth,auto]
\node (I_abc) at (0,0) {$I[abc]$};
\node (I_a) at (-4,-4) {$I[a]$};
\node (I_c) at (4,-4) {$I[c]$};
\node (I_ab) at (-2,-2) {$I[ab]$};
\node (I_bc) at (2,-2) {$I[bc]$};
\node (I_b) at (0,-4) {$I[b]$};
\node (I_1) at (-6,-6) {$I$};
\node (I_2) at (-2,-6) {$I$};
\node (I_3) at (2,-6) {$I$};
\node (I_4) at (6,-6) {$I$};
\draw[->,bend right=50] (I_abc)  to node[swap,sloped] {$\lambda[abc]$} (I_1);
\draw[->,bend left=50] (I_abc) to node[sloped] {$\rho[abc]$} (I_4);
\draw[->,bend right] (I_ab) to node[swap,sloped] {$\lambda[ab]$} (I_1);
\draw[->,bend right] (I_bc) to node[pos=0.7,swap,sloped] {$\lambda[bc]$} (I_2);
\draw[->,bend left] (I_ab) to node[pos=0.7,sloped] {$\rho[ab]$} (I_3);
\draw[->,bend left] (I_bc) to node[sloped] {$\rho[bc]$} (I_4);
\draw[->] (I_a) to node[sloped] {$\lambda[a]$} (I_1);  
\draw[->] (I_a) to node[swap,sloped] {$\rho[a]$} (I_2);  
\draw[->] (I_b) to node[sloped] {$\lambda[b]$} (I_2);  
\draw[->] (I_b) to node[swap,sloped] {$\rho[b]$} (I_3);  
\draw[->] (I_c) to node[sloped] {$\lambda[c]$} (I_3);  
\draw[->] (I_c) to node[swap,sloped] {$\rho[c]$} (I_4);  
\draw[dashed,->,bend right] (I_abc) to node[pos=0.3,swap,sloped] {$\lambda[a,bc]$} (I_a);
\draw[dashed,->,bend left] (I_abc) to node[pos=0.3,sloped] {$\rho[ab,c]$} (I_c);
\draw[dashed,->] (I_abc) to node[pos=0.7,sloped] {$\lambda[ab,c]$} (I_ab);
\draw[dashed,->] (I_abc) to node[swap,pos=0.7,sloped] {$\rho[a,bc]$} (I_bc);
\draw[dashed,->] (I_ab) to node[pos=0.7,sloped] {$\lambda[a,b]$} (I_a);
\draw[dashed,->] (I_ab) to node[pos=0.7,swap,sloped] {$\rho[a,b]$} (I_b);
\draw[dashed,->] (I_bc) to node[pos=0.7,swap,sloped] {$\rho[b,c]$} (I_c);
\draw[dashed,->] (I_bc) to node[pos=0.7,sloped] {$\lambda[b,c]$} (I_b);
\end{tikzpicture}
\caption{A pre-$\lambda\rho$-system with natural solutions}
\label{ns-pre-l-r-system}
\end{figure}

\begin{lemma}\label{way-up}
Let $\mathbf{M}$ be a monoid, and let  
$$
\mathcal{P} = (\lambda[a],\rho[a]\colon I[a]\to I)_{a\in M}
$$
be a pre-$\lambda\rho$-system. If $\mathcal{P}$ has natural solutions, then
$$
\mathcal{S} = \bigl(\langle\lambda[a,b],\rho[a,b]\rangle\colon 
I[ab]\to I[a]\times I[b]\bigr)_{(a,b)\in M^2}
$$
where $\lambda[a,b]$ and $\rho[a,b]$ are some solutions to 
the equations \textup{(}$\alpha'$\textup{)}, \textup{(}$\beta'$\textup{)}, and
\textup{(}$\gamma'$\textup{)}, is a 
unital $\lambda\rho$-system.
\end{lemma}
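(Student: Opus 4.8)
The plan is to verify that the chosen maps $\lambda[a,b]$, $\rho[a,b]$ satisfy the three defining identities $(\alpha)$, $(\beta)$, $(\gamma)$ of a $\lambda\rho$-system, and then to read off unitality from Theorem~\ref{main-monoid}. Throughout I use $I=I[1]$ and that $\lambda[1]=\rho[1]={id}_I$ (here $\lambda[1]$ and $\rho[1]$ are playing the role of $\lambda[1,1]$ and $\rho[1,1]$). First I would fix a system of solutions of $(\alpha')$, $(\beta')$, $(\gamma')$ once and for all, normalised at the identity of $\mathbf{M}$ by $\lambda[a,1]=\rho[1,a]={id}_{I[a]}$, $\lambda[1,a]=\lambda[a]$ and $\rho[a,1]=\rho[a]$; a one-line check, using $\lambda[1]=\rho[1]={id}_I$, shows that these assignments genuinely satisfy $(\alpha')$--$(\gamma')$ at the pairs $(a,1)$, $(1,a)$ and $(1,1)$, so the normalisation is legitimate, and for the remaining pairs (those with $a,b\neq1$) we take arbitrary solutions. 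Since every map in the resulting system satisfies $(\alpha')$, $(\beta')$, $(\gamma')$, the hypothesis that $\mathcal{P}$ has natural solutions guarantees that the implications $(\delta_1)$--$(\delta_4)$ hold for these maps, for all $a,b,c\in M$.

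To prove $(\alpha)$, fix $a,b,c$ and post-compose with $\lambda[a]$: applying $(\alpha')$ at $(a,b)$, then at $(ab,c)$, then at $(a,bc)$, together with associativity of $\mathbf{M}$, gives
$$
\lambda[a]\circ\lambda[a,b]\circ\lambda[ab,c]=\lambda[ab]\circ\lambda[ab,c]=\lambda[abc]=\lambda[a]\circ\lambda[a,bc],
$$
which is precisely the premise of $(\delta_1)$; hence $\lambda[a,b]\circ\lambda[ab,c]=\lambda[a,bc]$, that is, $(\alpha)$. Dually, post-composing with $\rho[c]$ and using $(\beta')$ at $(b,c)$, $(a,bc)$ and $(ab,c)$ gives $\rho[c]\circ\rho[b,c]\circ\rho[a,bc]=\rho[abc]=\rho[c]\circ\rho[ab,c]$, the premise of $(\delta_2)$, whence $(\beta)$.

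For $(\gamma)$, I would post-compose the target equality $\rho[a,b]\circ\lambda[ab,c]=\lambda[b,c]\circ\rho[a,bc]$ (both sides maps $I[abc]\to I[b]$) with $\lambda[b]$. On the left, $(\gamma')$ at $(a,b)$ rewrites $\lambda[b]\circ\rho[a,b]$ as $\rho[a]\circ\lambda[a,b]$, and then $(\alpha)$ --- just established --- collapses $\lambda[a,b]\circ\lambda[ab,c]$ to $\lambda[a,bc]$; on the right, $(\alpha')$ at $(b,c)$ rewrites $\lambda[b]\circ\lambda[b,c]$ as $\lambda[bc]$, and $(\gamma')$ at $(a,bc)$ identifies $\lambda[bc]\circ\rho[a,bc]$ with $\rho[a]\circ\lambda[a,bc]$. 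So both sides, after post-composing with $\lambda[b]$, reduce to $\rho[a]\circ\lambda[a,bc]$: this is exactly the premise of $(\delta_4)$, and $(\gamma)$ follows. (One could equally post-compose with $\rho[b]$ and invoke $(\delta_3)$ together with $(\beta)$.) This shows that $\mathcal{S}$ is a $\lambda\rho$-system, and since $\mathbf{M}$ is a monoid and $\lambda[a,1]=\rho[1,a]={id}_{I[a]}$ for every $a$ by our normalisation, Theorem~\ref{main-monoid} gives that $\mathcal{S}$ is unital.

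The only genuine obstacle is the ordering of the three steps: the $(\delta_4)$-premise required for $(\gamma)$ does not follow from $(\gamma')$ on its own but needs $(\alpha)$ (or, dually, $(\beta)$) fed in, so $(\gamma)$ has to be treated after at least one of $(\alpha)$, $(\beta)$. Everything else is bookkeeping: one checks that the premises of $(\delta_1)$--$(\delta_4)$ are all consequences of $(\alpha')$--$(\gamma')$ (this is the observation that their left-hand sides hold in any $\lambda\rho$-system), and that the normalisation at the unit is compatible with $(\alpha')$--$(\gamma')$, which is the one place where $\lambda[1]=\rho[1]={id}_I$ gets used.
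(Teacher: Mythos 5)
Your proof is correct and follows essentially the same route as the paper: obtain $(\alpha)$ and $(\beta)$ by the $(\delta_1)$/$(\delta_2)$ cancellations, then feed one of them into $(\gamma')$ and cancel, the paper choosing the $(\delta_3)$-with-$(\beta)$ variant you mention parenthetically while you use $(\delta_4)$ with $(\alpha)$. Your explicit normalisation of the solutions at the unit (using $\lambda[1]=\rho[1]={id}_I$) to secure unitality is a detail the paper's proof leaves implicit, and it is a legitimate addition since the lemma allows choosing the solutions.
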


\begin{proof}
To show that ($\alpha$) holds, first calculate, using ($\alpha'$)
\begin{align*}
\lambda[a]\circ\lambda[a,b]\circ\lambda[ab,c] &= 
\lambda[ab]\circ\lambda[ab,c] \\
&= \lambda[abc]\\
&= \lambda[a]\circ\lambda[a,bc]
\end{align*}
and then use ($\delta_1$) to cancel $\lambda[a]$ and obtain
$\lambda[a,b]\circ\lambda[ab,c] = \lambda[a,bc]$
as desired. By an analogous argument, ($\beta$) holds.

Now that ($\alpha$) and ($\beta$) have been shown to hold, we calculate, using
($\gamma'$) and ($\beta$)
\begin{align*}
\rho[b]\circ\rho[a,b]\circ\lambda[ab,c] &= 
\rho[ab]\circ\lambda[ab,c] \\
&= \lambda[c]\circ\rho[ab,c]\\
&= \lambda[c]\circ\rho[b,c]\circ\rho[a,bc]\\
&= \rho[b]\circ\lambda[b,c]\circ\rho[a,bc]
\end{align*}
ant then use ($\delta_3$) to cancel $\rho[b]$ and obtain
$\rho[a,b]\circ\lambda[ab,c] = \lambda[b,c]\circ\rho[ab,c]$ as desired.
\end{proof}

Combining Lemma~\ref{lr-prod-with-unit} with Lemma~\ref{way-up} we obtain
that any $\lambda\rho$-product is uniquely
determined by a pre-$\lambda\rho$-system with natural solutions.  
Therefore, we can---and will---extend the notation $\mathbf{H}^{[\mathcal{S}]}$
to the situation where $\mathcal{S}$ is such a system. 

\subsection{A free construction}\label{free-constr}
We will now show that that 
pre-$\lambda\rho$-systems with natural solutions, and thus 
$\lambda\rho$-systems, exist in abundance.  Let $X^*$ be
the free monoid, freely generated by some set $X$.
For any $x\in X$ we let $I[x]$ be a set,
and let $\lambda[x]\colon I[x]\to I$ and 
$\rho[x]\colon I[x]\to I$ be arbitrary maps. 
Then, we put $I[\varepsilon] = I$, and for each nonempty word
$w = x_1x_2\cdots x_k\in X^*$,   
we define $I[{x_1x_2\cdots x_k}]$ to be the set of sequences
$(v_1,v_2,\dots,v_k)\in I[{x_1}] \times \dots \times I[{x_k}]$ such that
\begin{align*}
\rho[{x_1}](v_1) &= \lambda[{x_2}](v_2) \\
\rho[{x_2}](v_2) &= \lambda[{x_3}](v_3) \\
 & \vdots  \\
\rho[{x_{k-1}}](v_{k-1}) &= \lambda[{x_k}](v_k). 
\end{align*}
To continue the construction, another piece of notation will be handy. 
Let $w = x_1x_2\cdots x_k$ be a word over $X$, and let  
$u = x_mx_{m+1}\cdots x_j$ be a subword 
of $w$, such that $1\leq m\leq j\leq k$.  Given a sequence 
$s = (v_{1},v_{2},\dots,v_{k})\in I[w]$, 
we write $s|_u$ for the truncated sequence
$(v_{m},\dots,v_{j})$. 
It s clear that $s|_u$ belongs to $I[u]$.

Now, we proceed inductively. First, we put $\lambda[\varepsilon] =
\rho[\varepsilon] = id_I$. The maps
$\lambda[z]$ and $\rho[z]$ for any $z\in X$ have already been defined.  
Let $w = w_1w_2$, with $w_1$ and $w_2$ nonempty. 
Assume the maps $\lambda[{w_i}]\colon I[{w_i}]\to I$ and 
$\rho[{w_i}]\colon I[{w_i}]\to I$ have been defined for $i\in \{1,2\}$. Define
$\lambda[{w}]\colon I[{w}]\to I$ and $\rho[{w}]\colon I[{w}]\to I$ by putting
\begin{align*}
\lambda[w](s) &= \lambda[{w_1}](s|_{w_1})\\
\rho[w](s) &= \rho[{w_2}](s|_{w_2})
\end{align*} 
for each $s\in I[w]$.

\begin{lemma}\label{free-is-free}
Let\/  $X^*$ be the free monoid generated by $X$, and 
for each element $w\in X^*$ let $I[w]$ be defined as above. 
Further, let
$$
\mathcal{F} =
\bigl(\langle\lambda[w],\rho[w]\rangle\colon I[w]\to I\bigr)_{w\in X^*}
$$ 
be the system of maps defined as above. Then $\mathcal{F}$
is a pre-$\lambda\rho$-system with natural solutions.
\end{lemma}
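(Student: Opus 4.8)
The plan is to verify two things for the system $\mathcal{F}$: first, that it is a pre-$\lambda\rho$-system, i.e., that for every pair of words $u,v\in X^*$ there exist maps $\lambda[u,v],\rho[u,v]\colon I[uv]\to I[u]\times I[v]$ satisfying ($\alpha'$), ($\beta'$), ($\gamma'$); and second, that these solutions are \emph{natural}, i.e., satisfy the cancellativity conditions ($\delta_1$)--($\delta_4$). For the first part, the obvious candidates are the truncation maps: given $s\in I[uv]$, regard it as a sequence indexed by the letters of $uv$, and set $\lambda[u,v](s) = s|_u$ and $\rho[u,v](s) = s|_v$. One must first check that these land in $I[u]$ and $I[v]$ respectively (they do, since the gluing conditions defining $I[uv]$ restrict to the gluing conditions defining $I[u]$ and $I[v]$), and then check ($\alpha'$), ($\beta'$), ($\gamma'$). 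Here ($\alpha'$) reads $\lambda[u]\circ\lambda[u,v] = \lambda[uv]$, which unwinds, via the inductive definition $\lambda[uv](s) = \lambda[u](s|_u)$, to the tautology $\lambda[u](s|_u) = \lambda[u](s|_u)$; similarly ($\beta'$) is $\rho[v](s|_v) = \rho[uv](s)$, again immediate from the definition. Condition ($\gamma'$), namely $\rho[u]\circ\lambda[u,v] = \lambda[v]\circ\rho[u,v]$, evaluated at $s\in I[uv]$, becomes $\rho[u](s|_u) = \lambda[v](s|_v)$; writing $uv = x_1\cdots x_k$ with $u = x_1\cdots x_m$, $v = x_{m+1}\cdots x_k$, the left side is $\rho[x_m](v_m)$ (unwinding the inductive definition of $\rho[u]$ down to its last letter) and the right side is $\lambda[x_{m+1}](v_{m+1})$ (unwinding $\lambda[v]$ down to its first letter), and these are equal because $s\in I[uv]$ precisely includes the gluing equation $\rho[x_m](v_m) = \lambda[x_{m+1}](v_{m+1})$. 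This last point is the only place where the defining equations of $I[uv]$ are genuinely used, and it is the conceptual heart of the argument; I would set up notation carefully (as the excerpt already does with $s|_u$) so that these unwindings are transparent.

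For the naturality part, the key observation is that with the truncation maps, \emph{all four composites appearing on the right-hand sides of ($\delta_1$)--($\delta_4$) are themselves truncation maps}, so that ($\delta_i$) holds vacuously --- indeed, the solutions $\lambda[u,v],\rho[u,v]$ are uniquely determined (not merely up to the weaker constraint), so the implications in ($\delta_i$) have equal sides on the right whenever the hypotheses could possibly apply. Concretely, for ($\delta_1$): both $\lambda[u,v]\circ\lambda[uv,w]$ and $\lambda[u,vw]$, applied to $s\in I[uvw]$, produce $s|_u$ (truncation is transitive: first cut $uvw$ down to $uv$ then to $u$, versus cut directly to $u$), so the consequent $\lambda[u,v]\circ\lambda[uv,w] = \lambda[u,vw]$ holds outright, hence a fortiori under any hypothesis. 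The same reasoning disposes of ($\delta_2$) (both sides truncate to $w$), ($\delta_3$), and ($\delta_4$) (in the latter two, both $\rho[u,v]\circ\lambda[uv,w]$ and $\lambda[v,w]\circ\rho[u,vw]$ applied to $s\in I[uvw]$ yield $s|_v$, the middle factor). So really the work is to observe transitivity of truncation and then note that naturality is free. I would phrase this as: \emph{the truncation solutions are the unique solutions, and they already satisfy ($\alpha$), ($\beta$), ($\gamma$) in their unprefixed form, so $\mathcal{F}$ trivially has natural solutions.}

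The main obstacle --- really the only nonroutine point --- is bookkeeping: one must be disciplined about which letter of a word is "first" versus "last" when unwinding the recursive definitions of $\lambda[w]$ and $\rho[w]$, because $\lambda[w]$ peels off the left factor and recurses into it (so $\lambda[w]$ ultimately depends only on the \emph{first} letter of $w$ and the value $v_1$), while $\rho[w]$ peels off the right factor and recurses into it (so $\rho[w]$ depends only on the \emph{last} letter). A clean way to avoid any association ambiguity in $w_1w_2$ is to prove by induction on word length the explicit formulas $\lambda[w](v_1,\dots,v_k) = \lambda[x_1](v_1)$ and $\rho[w](v_1,\dots,v_k) = \rho[x_k](v_k)$ for $w = x_1\cdots x_k$; once these closed forms are in hand, ($\alpha'$), ($\beta'$), ($\gamma'$) and the transitivity of truncation all become one-line checks, and the lemma follows. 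I expect the author's proof to do essentially this, possibly compressing the induction into a remark, since the closed forms make the whole statement nearly self-evident.
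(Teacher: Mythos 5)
Your proposal is correct and takes essentially the same route as the paper: an induction on word length showing that the truncation (projection) maps solve ($\alpha'$), ($\beta'$), ($\gamma'$) via the gluing equation at the boundary between the two subwords, followed by the observation that these same truncations already satisfy the unprefixed ($\alpha$), ($\beta$), ($\gamma$), which is exactly how the paper argues that $\mathcal{F}$ has natural solutions. The only blemish is your parenthetical claim that the solutions $\lambda[u,v]$, $\rho[u,v]$ are uniquely determined --- this is false in general (for instance, when $I$ is a singleton, every pair of maps satisfies ($\alpha'$)--($\gamma'$)) --- but it is not load-bearing, since your direct check that both composites equal truncation to the common subword is what carries the argument, just as in the paper.
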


\begin{proof}
For a word $w\in X^*$ we denote its length by $|w|$. We will first prove, by 
induction on $|w|$, that the system we have defined is a pre-$\lambda\rho$-system.
The base case is $|w| = 2$. Then $w_1$ and $w_2$ are
generators of $X^*$, say, $w_1 = a$ and $w_2 = b$. Take an arbitrary
element $(v, u)\in I[{ab}]$. Then, by definition, 
$\lambda[{ab}](v,u) = \lambda[a](v)$ and $\rho[{ab}](v,u) = \rho[b](u)$.
Thus, taking $\lambda[{a,b}]\colon I[{ab}]\to I[a]$ to be the first projection, and 
$\rho[{a,b}]\colon I[{ab}]\to I[b]$ to be the second projection, we have that the
identities $(\alpha')$, $(\beta')$, $(\gamma')$ 
are satisfied, by the conditions imposed on $\lambda[a]$ and $\rho[b]$. 

For the inductive step assume 
$\lambda[{w_1}]\colon I[{w_1}]\to I$ and $\rho[{w_2}]\colon I[{w_2}]\to I$ have been
defined, and take an arbitrary element $s\in I[{w}] = I[{w_1w_2}]$.
Without loss of generality, we can assume $w_1 = u_1a$ and 
$w_2 = bu_2$, where $a,b\in H$. Then, by construction of 
$I[w]$ we have that $\rho[{a}](s|_{a}) = \lambda[{b}](s|_{b})$.
Now, $\rho[{w_1}] = \rho[{u_1a}]$, so, by inductive hypothesis, we obtain 
$\rho[{w_1}](s|_{w_1}) = \rho[{a}](s|_{a})$. Similarly,
$\lambda[{w_2}] = \lambda[{bu_2}]$, so we get 
$\lambda[{w_2}](s|_{w_2}) = \lambda[{b}](s|_{b})$. Therefore,
$\rho[{w_1}](s|_{w_1}) = \lambda[{w_2}](s|_{w_2})$. Then, 
taking $\lambda[{w_1,w_2}]\colon I[{w_1w_2}]\to I[{w_1}]$ to be the projection
onto $I[{w_1}]$, and $\rho[{w_1,w_2}]\colon I[{w_1w_2}]\to I[{w_2}]$ to be the
the projection on $I[{w_2}]$, we can see that $(\alpha')$, $(\beta')$, 
$(\gamma')$ are satisfied.

It remains to show that $\mathcal{F}$ has
natural solutions. To show that we only need to provide maps 
$\lambda[{ab,c}]$ and $\rho[{a,bc}]$ which make all diagrams in  
Figure~\ref{ns-pre-l-r-system} commute. But these maps have been already given
by our construction. Namely, taking $w_1 = ab$ and $w_2 = c$ we have that
$\lambda[{ab,c}]$ is the first projection from 
$I[{abc}]\subseteq I[{ab}]\times I[c]$ to $I[{ab}]$. Similarly, 
$\rho[{a,bc}]$ is the second projection from 
$I[{abc}]\subseteq I[a]\times I[{bc}]$ to $I[{bc}]$. Formally, the argument should
be again cast in the form of induction of the length of the word $w = abc$, but 
we are afraid it would then produce clutter rather than provide explanation.
\end{proof}

The final part of this section will show that the free construction
described above is indeed universal. Recall from
Section~\ref{cats}, Definition~\ref{general-t}, the notion of a 
transformation between $\lambda\rho$-systems.

\begin{theorem}\label{trans}
Let $\mathcal{S} = (\mathbf{S},\blambda,\brho)$ and
$\mathcal{S}' = (\mathbf{S}',\blambda',\brho')$ be $\lambda\rho$-systems, 
and let $\mathbf{t} = (t,h)$ be a transformation, with
$\mathbf{t}\colon \mathcal{S}'\to \mathcal{S}$.
For an arbitrary semigroup $\mathbf{H}$, let 
$\mathbf{H}^\mathbf{t}\colon 
\mathbf{H}^{[\mathcal{S}]} \to \mathbf{H}^{[\mathcal{S}']}$ be the map defined 
by $\mathbf{H}^\mathbf{t}(x,a) = (x\circ t[a],\ h(a))$
for every $(x,a)\in\biguplus_{a\in S} H^{I[a]}$. 
Then, $\mathbf{H}^\mathbf{t}$ is a homomorphism. Moreover,
$\mathbf{H}^{-}$ is a contravariant functor from the category
$\Gamma(\blambda\brho)$ to the category $\mathsf{Sg}$ of semigroups.
\end{theorem}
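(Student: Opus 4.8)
The plan is to verify the homomorphism property by a short step-by-step computation, and then to read off functoriality almost for free. First I would fix $(x,a),(y,b)\in H^{[\mathcal{S}]}$ and unwind $\mathbf{H}^\mathbf{t}\bigl((x,a)\star(y,b)\bigr)$. By Definition~\ref{rl-prod} its first coordinate is $\bigl((x\circ\lambda[a,b])\cdot(y\circ\rho[a,b])\bigr)\circ t[ab]$, which by Proposition~\ref{P1} equals $(x\circ\lambda[a,b]\circ t[ab])\cdot(y\circ\rho[a,b]\circ t[ab])$, and its second coordinate is $h(ab)$. Now the two commuting squares in Definition~\ref{general-t} give $\lambda[a,b]\circ t[ab]=t[a]\circ\lambda'[h(a),h(b)]$ and $\rho[a,b]\circ t[ab]=t[b]\circ\rho'[h(a),h(b)]$, while $h(ab)=h(a)h(b)$ since $h$ is a homomorphism; substituting these and using associativity of $\circ$ turns the first coordinate into $((x\circ t[a])\circ\lambda'[h(a),h(b)])\cdot((y\circ t[b])\circ\rho'[h(a),h(b)])$ and the second into $h(a)h(b)$, which is exactly the product of $\mathbf{H}^\mathbf{t}(x,a)=(x\circ t[a],h(a))$ and $\mathbf{H}^\mathbf{t}(y,b)=(y\circ t[b],h(b))$ computed in $\mathbf{H}^{[\mathcal{S}']}$. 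Well-definedness of $\mathbf{H}^\mathbf{t}$ is immediate, since $t[a]\colon I'[h(a)]\to I[a]$ forces $x\circ t[a]\in H^{I'[h(a)]}$. The only thing to watch is the variance: $\mathbf{t}\colon\mathcal{S}'\to\mathcal{S}$ carries a homomorphism $h\colon\mathbf{S}\to\mathbf{S}'$ pointing the other way and induces $\mathbf{H}^\mathbf{t}\colon\mathbf{H}^{[\mathcal{S}]}\to\mathbf{H}^{[\mathcal{S}']}$, so the two squares must be used in precisely the orientation above.

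For the functor claim, on objects $\mathbf{H}^{-}$ sends a $\lambda\rho$-system $\mathcal{S}$ to the semigroup $\mathbf{H}^{[\mathcal{S}]}$, which is genuinely a semigroup by Theorem~\ref{main}, and on arrows it sends $\mathbf{t}$ to the homomorphism $\mathbf{H}^\mathbf{t}$ just produced, whose direction exhibits contravariance. Identities are immediate: the identity transformation of $\mathcal{S}$ is $\bigl((\mathrm{id}_{I[a]})_{a\in S},\mathrm{id}_S\bigr)$, so $\mathbf{H}^{\mathrm{id}_{\mathcal{S}}}(x,a)=(x\circ\mathrm{id}_{I[a]},a)=(x,a)$. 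For composition, take $\mathbf{t}_1=(t_1,h_1)\colon\mathcal{S}_1\to\mathcal{S}_2$ and $\mathbf{t}_2=(t_2,h_2)\colon\mathcal{S}_2\to\mathcal{S}_3$; unwinding the composition law of the Grothendieck construction (Remark~\ref{cat-general-lr}), together with the way $\bm{\lambda\rho}(h_1)$ reindexes slice transformations, the composite $\mathbf{t}_1\circ\mathbf{t}_2\colon\mathcal{S}_1\to\mathcal{S}_3$ works out to $(t',h')$ with $h'=h_1\circ h_2$ and $t'[a]=t_2[a]\circ t_1[h_2(a)]$ for $a\in S_3$. Then for $(x,a)\in H^{[\mathcal{S}_3]}$ both $\mathbf{H}^{\mathbf{t}_1\circ\mathbf{t}_2}(x,a)$ and $\bigl(\mathbf{H}^{\mathbf{t}_1}\circ\mathbf{H}^{\mathbf{t}_2}\bigr)(x,a)$ reduce, directly from the definitions, to $\bigl(x\circ t_2[a]\circ t_1[h_2(a)],\,h_1(h_2(a))\bigr)$, yielding $\mathbf{H}^{\mathbf{t}_1\circ\mathbf{t}_2}=\mathbf{H}^{\mathbf{t}_1}\circ\mathbf{H}^{\mathbf{t}_2}$, which is exactly the contravariant functoriality asserted.

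The one genuinely fiddly point in all this is extracting the explicit shape of the composite transformation from the Grothendieck composition law; once that is in hand, every verification above is pure substitution, using Definition~\ref{general-t}, Proposition~\ref{P1}, and the fact that the skeleton maps are homomorphisms. I would also note that the two transformation squares used in the homomorphism check are exactly the identities one verifies in order to know that $\mathbf{t}_1\circ\mathbf{t}_2$ is again a transformation, so the whole argument stays inside the elementary toolkit already developed.
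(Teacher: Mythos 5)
Your proof is correct and follows essentially the same route as the paper: the homomorphism check is the very same computation, distributing $\circ\, t[ab]$ over the pointwise product via Proposition~\ref{P1} and then substituting the two commuting squares of Definition~\ref{general-t}, with $h(ab)=h(a)h(b)$ handling the skeleton coordinate. Your explicit treatment of identities and composites (with $h'=h_1\circ h_2$ and $t'[a]=t_2[a]\circ t_1[h_2(a)]$, i.e.\ $t_1$ reindexed along $h_2$, rather than along $h_1$ as your prose momentarily suggests) just fills in the ``moreover'' part that the paper declares straightforward, and it is the correct unwinding of the Grothendieck composition law.
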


\begin{proof}
It is clear that the map $\mathbf{H}^\mathbf{t}$ is well defined.
Let $(x,a), (y,b)\in \biguplus_{a\in S} H^{I[a]}$. Then, we have
\begin{align*}
\mathbf{H}^\mathbf{t}(x\star y,\ ab)
&= \bigl((x\star y)\circ t[ab],\ h(ab)\bigr)\\
&= \Bigl(\bigl((x\circ\lambda[{a,b}])(y\circ\rho[{a,b}])\bigr)\circ t[ab],\
    h(a)h(b)\Bigr)\\
&= \Bigl((x\circ\lambda[{a,b}]\circ t[ab])(y\circ\rho[{a,b}]\circ t[ab]),\
h(a)h(b)\Bigr) \\
&= \Bigl(\bigl(x\circ t[a]\circ \lambda'[{h(a),h(b)}]\bigr)
    \bigl(y\circ t[b]\circ\rho'[{h(a),h(b)}]\bigr),\ h(a)h(b)\Bigr)\\
&= \bigl(x\circ t[a],\ h(a)\bigr)\star\bigl(y\circ t[b],\ h(b)\bigr)\\
&= \mathbf{H}^\mathbf{t}(x,a)\star \mathbf{H}^\mathbf{t}(y,b). 
\end{align*}
This proves that $\mathbf{H}^\mathbf{t}$ is a homomorphism. The proof of the
moreover part is straightforward. 
\end{proof}

Consider $\lambda\rho$-system $\mathcal{S}$ over some semigroup 
$\mathbf{S}$. Taking $S$ as the set of free generators, we form 
the free monoid $S^*$. Then, $\mathbf{S}^1$ is a homomorphic image
(in fact, a retract) of $S^*$ via the map extending the identity map
on $S$. Let $\mathcal{S}^1$ be the $\lambda\rho$-system over
$\mathbf{S}^1$, extending $\mathcal{S}$, as in Definition~\ref{add-unit}.
Next, let $\mathcal{P}^1$ be the pre-$\lambda\rho$-system associated
with $\mathcal{S}^1$ as in Lemma~\ref{way-down}. As
$\mathcal{P}^1$ is just a restriction of $\mathcal{S}^1$, it has natural
solutions, so by Lemma~\ref{way-up} it induces a unital $\lambda\rho$-system.
We will denote that system by $\mathcal{F}(\mathcal{S}^1)$. 

Since every element of $S^*$ is a word $s_1s_2\dots s_n$ over $S$, but on
the other hand $s_1s_2\cdots s_n$ also represents a product of $s_1,\dots,s_n$ 
as an element of $S$, we need some notational device to 
distinguish the two. We will write $s_1s_2\dots s_n$ for the word, 
and $\otimes(s_1s_2\dots s_n)$ for the product. Thus, for example,
$I[{\otimes(s_1s_2\dots s_n)}]$ will be a set from the original system $\mathcal{S}$,
and $I[{s_1s_2\dots s_n}]$ will be a set from the system 
$\mathcal{F}(\mathcal{S}^1)$. Further, $I[\otimes\varepsilon] = I[1]$ is a
singleton set from $\mathcal{S}^1$, and, by construction of
$\mathcal{F}(\mathcal{S}^1)$, the same set as a
member of $\mathcal{F}(\mathcal{S}^1)$ should be denoted by
$I[\varepsilon]$. We will simply write $I$ in either case.
Also  recall that, by construction of 
$\mathcal{F}(\mathcal{S}^1)$, we have 
$I[{s_1s_2\dots s_n}]\subseteq I[{s_1}]\times I[{s_2}]\times\dots I[{s_n}]$.

\begin{definition}\label{free-t}
Let $\mathcal{S}$, $\mathcal{S}^1$ and $\mathcal{F}(\mathcal{S}^1)$ be as above.
We define a system  $\mathbf{t}$ of maps as follows. 
First, we let $t\colon S^*\to \mathbf{S}^1$ be the 
homomorphism extending the identity map on $S$,
and such that $t(\varepsilon) = 1$. 
Next, for any $s_1,s_2,\dots, s_n \in S$, we  
define the map
$$
t[s_1s_2\dots s_n]\colon I[\otimes(s_1s_2\dots s_n)] \longrightarrow
I[{s_1}]\times I[{s_2}]\times\dots\times I[{s_n}]
$$
for each $v\in I[\otimes(s_1s_2\cdots s_n)]$, by putting
$t[s_1s_2\dots s_n](v) = \langle v_1,\dots, v_n\rangle$, where
\begin{itemize}
\item $v_1 = \lambda[s_1, \otimes(s_2s_3\cdots s_n)](v)$,
\item $v_j = \rho[\otimes(s_1\cdots s_{j-1}), s_j]\circ 
\lambda[\otimes(s_1\cdots s_j), \otimes(s_{j+1}\cdots s_n)](v)$, 
\item $v_n = \rho[\otimes(s_1\cdots s_{n-1}), s_n](v)$.
\end{itemize}
Finally, we let $t[\varepsilon]\colon I\to I$
be the (unique) constant map.
\end{definition}  

\begin{lemma}\label{well-defd}
$\mathcal{S}$, $\mathcal{S}^1$ and $\mathcal{F}(\mathcal{S}^1)$ be as above.  
Then, the following hold:
\begin{enumerate}
\item For each $s\in S$, we have $t[s] = {id}_{I[s]}$.
\item For any $s_1,s_2,\dots, s_n \in S$, we have
\begin{align*}
v_j &= \rho[\otimes(s_1\cdots s_{j-1}), s_j]\circ 
\lambda[\otimes(s_1\cdots s_j), \otimes(s_{j+1}\cdots s_n)](v)\\
&= \lambda[s_j, \otimes(s_{j+1}\cdots s_n)]\circ 
\rho[\otimes(s_1\cdots s_{j-1}),\otimes(s_{j}\cdots s_n)](v)
\end{align*}
for each $j\in \{2,\cdots,n-1\}$.
\end{enumerate}
\end{lemma}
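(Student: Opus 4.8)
The plan is to deduce both claims directly from axioms that are already available for $\mathcal{S}^1$, with no induction required. Recall that, by Lemma~\ref{unit-added}, $\mathcal{S}^1$ is a \emph{unital} $\lambda\rho$-system over the monoid $\mathbf{S}^1$; hence its equations ($\alpha$), ($\beta$), ($\gamma$) hold for all triples of elements of $S^1$, and by Theorem~\ref{main-monoid} we have $\lambda[a,1] = \rho[1,a] = {id}_{I[a]}$ for every $a\in S^1$. Every map $\lambda[\cdot,\cdot]$, $\rho[\cdot,\cdot]$ appearing in Definition~\ref{free-t} is a map of $\mathcal{S}^1$; the only bookkeeping point is to translate the formal products $\otimes(\cdots)$ that index these maps into genuine elements of $S^1$ before invoking the axioms.

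For part (1) I would simply unwind Definition~\ref{free-t} for a one-letter word $w = s$. Here $\otimes(s) = s$ and the empty tail gives $\otimes\varepsilon = 1$, so the single coordinate is $v_1 = \lambda[s,\otimes\varepsilon](v) = \lambda[s,1](v) = v$, using $\lambda[s,1] = {id}_{I[s]}$; therefore $t[s] = {id}_{I[s]}$. (The last bullet of Definition~\ref{free-t} yields the same value via $\rho[1,s] = {id}_{I[s]}$, which serves as a convenient consistency check.)

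For part (2) I would put $a = \otimes(s_1\cdots s_{j-1})$, $b = s_j$, and $c = \otimes(s_{j+1}\cdots s_n)$. Since $2\leq j\leq n-1$, both $a$ and $c$ are products of elements of $S$, hence lie in $S$, and $\otimes(s_1\cdots s_j) = ab$, $\otimes(s_j\cdots s_n) = bc$, $\otimes(s_1\cdots s_n) = abc$. The first displayed expression for $v_j$ is, verbatim, its definition, and in this notation it reads $\rho[a,b]\circ\lambda[ab,c](v)$, whereas the second reads $\lambda[b,c]\circ\rho[a,bc](v)$. Their equality is precisely equation ($\gamma$) of the $\lambda\rho$-system, applied to the triple $(a,b,c)$, and the claim follows.

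There is essentially no obstacle beyond this translation of notation: the substance of part (2) is the single commutation postulated by ($\gamma$) — one of the squares in Figures~\ref{l-r-system} and~\ref{ns-pre-l-r-system} — and the substance of part (1) is merely that the extreme maps $\lambda[\cdot,1]$, $\rho[1,\cdot]$ of a unital system collapse to identities. The one point that requires care is the hypothesis $j\in\{2,\cdots,n-1\}$, which is exactly what guarantees that $a$ and $c$ lie in $S$ (indeed, that they are defined at all), thereby licensing the application of ($\gamma$).
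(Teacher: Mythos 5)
Your proof is correct and takes essentially the same route as the paper's: part (1) is the same unwinding of Definition~\ref{free-t} for a one-letter word using $\lambda[s,1]={id}_{I[s]}$ in the unital extension, and part (2) is exactly the paper's (terser) observation, namely equation ($\gamma$) applied to the triple $\bigl(\otimes(s_1\cdots s_{j-1}),\,s_j,\,\otimes(s_{j+1}\cdots s_n)\bigr)$, which you simply spell out more explicitly.
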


\begin{proof}
For any $s\in S$, we have $I[s] = I[\otimes s]$ by construction.
Without loss of generality, let $s = s_1$.  
Take a $v\in I[s_1]$. By definition we have
$t[s_1](v) = \lambda[s_1,\otimes\varepsilon](v) = \lambda[s_1,1](v) = v$
because $\mathcal{S}^1$ is the unital extension of $\mathcal{S}$. This proves (1).
Next, (2) follows easily from the fact that $\lambda$ and $\rho$ come from a 
$\lambda\rho$-system. 
\end{proof}

\begin{lemma}
Let $\mathcal{S}$, $\mathcal{S}^1$, $\mathcal{F}(\mathcal{S}^1)$
and $\mathbf{t}$ be as above.  
Then, $\mathbf{t}\colon\mathcal{S}^1\to \mathcal{F}(\mathcal{S}^1)$
is a transformation. 
\end{lemma}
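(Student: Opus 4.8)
The plan is to reduce the two commuting squares of Definition~\ref{general-t} to the structural identities $(\alpha)$, $(\beta)$, $(\gamma)$ via the explicit formulas of Definition~\ref{free-t}. First, recall that in $\mathcal{F}(\mathcal{S}^1)$ the set $I = I[1]$ is a singleton, so $I[s_1\cdots s_n]$ is the full product $I[s_1]\times\cdots\times I[s_n]$, and the maps $\lambda[w_1,w_2]$, $\rho[w_1,w_2]$ of $\mathcal{F}(\mathcal{S}^1)$ are the projections onto the leading $|w_1|$, resp.\ trailing $|w_2|$, coordinates; in particular the codomain of each $t[w]$ is really $I[w]$, as a transformation requires. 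Writing $h = t$ for the word-evaluation homomorphism $S^*\to\mathbf{S}^1$ of Definition~\ref{free-t}, so that $h(w) = \otimes w$, and using that $\lambda'[h(w_1),h(w_2)] = \lambda[\otimes w_1,\otimes w_2]$ and $\rho'[h(w_1),h(w_2)] = \rho[\otimes w_1,\otimes w_2]$ are the maps of $\mathcal{S}^1$ (equal to those of $\mathcal{S}$ on nonempty words, by Definition~\ref{add-unit} and Lemma~\ref{unit-added}), the squares of Definition~\ref{general-t} become, for $w_1w_2 = s_1s_2\cdots s_n$ with $|w_1| = m$,
\begin{gather*}
t[w_1]\circ\lambda[\otimes w_1,\otimes w_2] = \pi_{w_1}\circ t[w_1w_2],\\
t[w_2]\circ\rho[\otimes w_1,\otimes w_2] = \pi_{w_2}\circ t[w_1w_2],
\end{gather*}
with $\pi_{w_1},\pi_{w_2}$ the coordinate projections. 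The cases where $w_1$ or $w_2$ is empty are immediate: the relevant map of $\mathcal{S}^1$ is then an identity (unitality of $\mathcal{S}^1$), the corresponding $\pi$ is an identity or the empty projection, and $t[w_1w_2]$ coincides with $t[w_1]$ or $t[w_2]$. So assume both words nonempty.

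Since both sides of each equation take values in a product, it suffices to compare them coordinate by coordinate. For the left square, the $j$-th coordinate ($1\le j\le m$) of the right-hand side applied to $v\in I[\otimes(s_1\cdots s_n)]$ is $v_j$ in the notation of Definition~\ref{free-t}, while the $j$-th coordinate of the left-hand side is what the formula of Definition~\ref{free-t} for the word $w_1 = s_1\cdots s_m$ yields after precomposition with $\lambda[\otimes(s_1\cdots s_m),\otimes(s_{m+1}\cdots s_n)]$. In each of the cases $j=1$, $1<j<m$, $j=m$ this is a composite of at most two consecutive $\lambda$'s, which collapses onto the $v_j$-formula by one application of $(\alpha)$ with $a=\otimes(s_1\cdots s_j)$, $b=\otimes(s_{j+1}\cdots s_m)$, $c=\otimes(s_{m+1}\cdots s_n)$ (read $a=s_1$ when $j=1$; when $j=m$ there is a single $\lambda$ and nothing to collapse). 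This settles the left square.

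For the right square I would first isolate a sublemma: for $1\le m<p\le q\le n$,
\begin{multline*}
\bigl(\rho[\otimes(s_p\cdots s_{q-1}),s_q]\circ\lambda[\otimes(s_p\cdots s_q),\otimes(s_{q+1}\cdots s_n)]\bigr)\circ\rho[\otimes(s_1\cdots s_{p-1}),\otimes(s_p\cdots s_n)]\\
= \rho[\otimes(s_1\cdots s_{q-1}),s_q]\circ\lambda[\otimes(s_1\cdots s_q),\otimes(s_{q+1}\cdots s_n)],
\end{multline*}
i.e., truncating $s_1\cdots s_n$ to the tail $s_p\cdots s_n$ and then extracting the slice of the letter $s_q$ gives the same map $I[\otimes(s_1\cdots s_n)]\to I[s_q]$ as extracting that slice directly. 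Its proof is one use of $(\gamma)$ — to commute $\rho[\otimes(s_1\cdots s_{p-1}),\otimes(s_p\cdots s_n)]$ past $\lambda[\otimes(s_p\cdots s_q),\otimes(s_{q+1}\cdots s_n)]$ — followed by one use of $(\beta)$ to merge the two $\rho$'s then appearing; the boundary cases $q=p$ and $q=n$, where one factor of the slice is absent, come from Lemma~\ref{well-defd}(2) and from $(\beta)$ alone. Applying the sublemma with $p=m+1$: for $m<j\le n$, the $(j-m)$-th coordinate of $t[w_2]\circ\rho[\otimes w_1,\otimes w_2]$ is, by the formula of Definition~\ref{free-t} for $w_2 = s_{m+1}\cdots s_n$, the slice of $s_j$ within $s_{m+1}\cdots s_n$ precomposed with $\rho[\otimes(s_1\cdots s_m),\otimes(s_{m+1}\cdots s_n)]$, which by the sublemma equals the slice of $s_j$ within $s_1\cdots s_n$, namely $v_j$. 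This settles the right square and hence the lemma.

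The argument uses nothing beyond $(\alpha)$, $(\beta)$, $(\gamma)$, unitality of $\mathcal{S}^1$, and Lemma~\ref{well-defd}. The only real difficulty is bookkeeping: keeping the index ranges in the sublemma and in the piecewise definition of the $v_j$ straight, and checking that the degenerate positions (first or last letter of a word, empty factor) are all covered. Strictly, the sublemma and the two squares ought to be phrased as inductions on word length, parallel to the proof of Lemma~\ref{free-is-free}, but the inductive content is carried entirely by the three structural identities, so I would present only the single-step computations above.
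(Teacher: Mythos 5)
Your proof is correct, and on the main point it runs along the same lines as the paper's: verify the commuting squares coordinatewise, collapsing composites by single applications of $(\alpha)$, $(\beta)$, $(\gamma)$; your treatment of the $\lambda$-square is essentially the paper's computation $v_j=u_j$. The genuine differences are two. First, the paper spends the first half of its proof checking that $t[s_1\cdots s_n]$ actually lands in $I[s_1\cdots s_n]$, i.e.\ that $\rho[s_j](v_j)=\lambda[s_{j+1}](v_{j+1})$, by explicit computation; you dispose of this by noting that in the unital extension of Definition~\ref{add-unit} the set $I=I[1]$ is a singleton, so the matching constraints are vacuous and $I[w]$ is the full product with $\lambda$, $\rho$ the projections. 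Under the paper's stated definitions this shortcut is legitimate, and it shows the paper's part (i) is automatic in this setting; what the paper's longer computation buys is robustness — it is the argument one needs if the free construction is seeded with a non-singleton $I$, and your reading sits somewhat uneasily with the paper's closing Remark, which tacitly treats $I[s_1\cdots s_n]$ as a proper subset of the product. Second, the paper explicitly verifies only the $\lambda$-diagram (the $\rho$-diagram being left to an implicit symmetry), whereas you prove it outright via your sublemma — one application of $(\gamma)$ to move the truncation $\rho[\otimes(s_1\cdots s_{p-1}),\otimes(s_p\cdots s_n)]$ past the $\lambda$, then one of $(\beta)$ to merge the two $\rho$'s — which is correct and is exactly the suppressed symmetric computation, made honest despite the middle coordinates of Definition~\ref{free-t} being written in the $\rho\circ\lambda$ form (here Lemma~\ref{well-defd}(2) covers the boundary case $q=p$, as you say). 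Your handling of the empty-word and single-letter edge cases is also fine, so the proposal stands as a complete, slightly more streamlined variant of the paper's argument.
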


\begin{proof}
We need to show: (i) that the range of each map $t[s_1s_2\cdots s_n]$ belongs 
to  $I[{s_1s_2\cdots s_n}]$, and (ii) that the appropriate diagrams commute. 
To show (i), calculate:
\begin{align*}
\rho[{s_1}](v_1) &= \rho[{s_1,1}](v_1) \\
&=  \rho[s_1,1]\circ\lambda[s_1,\otimes(s_2s_3\cdots s_n)](v) \\
&= \rho[s_1,1]\circ\lambda[s_1,s_2]\circ
   \lambda[\otimes(s_1s_2),\otimes(s_3\cdots s_n)](v)\\ 
&= \lambda[1,s_2]\circ\rho[s_1,s_2]\circ
   \lambda[\otimes(s_1s_2),\otimes(s_3\cdots s_n)](v)\\ 
&= \lambda[1,s_2](v_2)\\
&= \lambda[s_2](v_2)
\end{align*}
where the first and last equalities are respectively the definitions of 
$\rho[s_1]$ and $\lambda[s_2]$, the second and fifth are
the definitions of $v_1$ and $v_2$, and the
third and fourth follow from ($\alpha$) and ($\beta$).
Similarly, but cutting a few corners now, we calculate:
\begin{align*}
\rho[s_j](v_j) &=  \rho[s_j,1]\circ\lambda[s_j,\otimes(s_{j+1}\cdots s_n)](v) \\
&= \lambda[1,\otimes(s_{j+1}\cdots s_n)]\circ\rho[s_j,\otimes(s_{j+1}\cdots s_n]
\circ\rho[\otimes(s_1\cdots s_{j-1}),\otimes(s_j\cdots s_n)](v)\\ 
&= \lambda[1,\otimes(s_{j+1}\cdots s_n)]\circ
   \rho[\otimes(s_1\cdots s_j),\otimes(s_{j+1}\cdots s_n)](v)\\
&= \lambda[1,s_{j+1}]\circ\lambda[s_{j+1},\otimes(s_{j+2}\cdots s_n)]\circ
\rho[\otimes(s_1\cdots s_j),\otimes(s_{j+1}\cdots s_n)](v)\\
&= \lambda[1, s_{j+1}](v_{j+1})\\
&= \lambda[s_{j+1}](v_{j+1})
\end{align*}
and
\begin{align*}
\rho[s_{n-1}](v_{n-1}) &= \rho[s_{n-1},1]\circ\lambda[s_{n-1},s_n]
\circ \rho[\otimes(s_1\cdots s_{n-2}),\otimes(s_{n-1}s_n)](v)\\
&= \lambda[1,s_n]\circ\rho[s_{n-1},s_n]
\circ \rho[\otimes(s_1\cdots s_{n-2}),\otimes(s_{n-1}s_n)](v)\\
&= \lambda[1,s_n]\circ\rho[\otimes(s_1\cdots s_{n-1}),s_n](v)\\
&= \lambda[s_n](v_n).
\end{align*}

For (ii), let $v\in I[\otimes(s_1s_2\cdots s_n)]$ and 
let $u = \lambda[\otimes(s_1\cdots s_k),\otimes(s_{k+1}\cdots s_n)](v)$. With this,
commutativity of the relevant diagram amounts to the  
equality between $\langle v_1,\dots, v_k\rangle$ (the first 
$k$ coordinates of $\langle v_1,\dots, v_n\rangle$) and 
$\langle u_1,\dots, u_k\rangle$. To verify that these indeed hold, we calculate
\begin{align*}
v_1 &= \lambda[s_1,\otimes(s_2\cdots s_n)](v)\\
&= \lambda[s_1,\otimes(s_2\cdots s_k)]\circ
\lambda[\otimes(s_1\cdots s_k),\otimes(s_{k+1}\cdots s_n)](v)\\
&= \lambda[s_1,\otimes(s_2\cdots s_k)](u)\\
&= u_1
\end{align*}
then, for $j\in\{2,\dots,k-1\}$
\begin{align*}
v_j &= \rho[\otimes(s_1\cdots s_{j-1}),s_j]\circ 
\lambda[\otimes(s_1\cdots s_j),\otimes(s_{j+1}\cdots s_n)](v)\\
&= \rho[\otimes(s_1\cdots s_{j-1}),s_j]\circ 
\lambda[\otimes(s_1\cdots s_j),\otimes(s_1\cdots s_k)] \circ 
\lambda[\otimes(s_1\cdots s_k),\otimes(s_{k+1}\cdots s_n)](v)\\
&= \rho[\otimes(s_1\cdots s_{j-1}),s_j]\circ 
\lambda[\otimes(s_1\cdots s_j),\otimes(s_1\cdots s_n)](u)\\
&= u_j
\end{align*}
and finally
\begin{align*}
v_k &= \rho[\otimes(s_1\cdots s_{k-1}), s_k]
\circ \lambda[\otimes(s_1\cdots s_k),\otimes(s_{k+1}\cdots s_n)](v)\\
&= \rho[\otimes(s_1\cdots s_{k-1}),s_k](u)\\
&= u_k
\end{align*}
proving (ii). 
\end{proof}

\begin{theorem}\label{divide}
Let $\mathcal{S}$ be a $\lambda\rho$-system over a semigroup $\mathbf{S}$, and
let $\mathbf{H}$ be a semigroup. Let
$\mathbf{t}\colon \mathcal{S}^1\to \mathcal{F}(\mathcal{S}^1)$
be the transformation from Definition~\ref{free-t}. Then,
$$
\mathbf{H}^{\mathbf{t}}\colon \mathbf{H}^{[\mathcal{F}(\mathcal{S}^1)]}
\longrightarrow \mathbf{H}^{[\mathcal{S}^1]}
$$
defined as in Lemma~\ref{trans}, is a surjective homomorphism. 
\end{theorem}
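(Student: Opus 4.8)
The homomorphism part requires no work: the lemma immediately preceding establishes that $\mathbf{t}\colon\mathcal{S}^1\to\mathcal{F}(\mathcal{S}^1)$ is a transformation, so Theorem~\ref{trans}, applied to this $\mathbf{t}$, already tells us that $\mathbf{H}^{\mathbf{t}}$ is a homomorphism for every semigroup $\mathbf{H}$. Hence the plan is to concentrate entirely on surjectivity, and the idea is that every element of $\mathbf{H}^{[\mathcal{S}^1]}$ admits an almost trivial preimage, namely one sitting over a word of length at most one.

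First I would line up the three systems of index sets involved. An element of $\mathbf{H}^{[\mathcal{S}^1]}$ is a pair $(y,a)$ with $a\in S^1$ and $y\in H^{I[a]}$, where $I[a]$ for $a\in S$ is the index set of $\mathcal{S}$ and $I[1]=I$ is a singleton; an element of $\mathbf{H}^{[\mathcal{F}(\mathcal{S}^1)]}$ is a pair $(x,w)$ with $w\in S^*$ and $x\in H^{I[w]}$, and by the way $\mathcal{F}(\mathcal{S}^1)$ is built (via $\mathcal{P}^1$) the index set attached to a one-letter word $s\in S$ is again exactly the set $I[s]$ of $\mathcal{S}$, while $I[\varepsilon]=I$. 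Recall also that $\mathbf{H}^{\mathbf{t}}$ sends $(x,w)$ to $(x\circ t[w],\,h(w))$, where $h\colon S^*\to\mathbf{S}^1$ is the homomorphism with $h(\varepsilon)=1$ and $h(s)=s$ for $s\in S$.

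The crucial observation is that $t[w]$ is an identity map whenever $|w|\le 1$: $t[\varepsilon]$ is the unique self-map of the singleton $I$, and $t[s]={id}_{I[s]}$ for every $s\in S$ by Lemma~\ref{well-defd}(1). Consequently, given $(y,a)\in\mathbf{H}^{[\mathcal{S}^1]}$ there are two cases. If $a\in S$, then, reading $a$ as the one-letter word $a$, the pair $(y,a)$ is literally an element of $\mathbf{H}^{[\mathcal{F}(\mathcal{S}^1)]}$ and $\mathbf{H}^{\mathbf{t}}(y,a)=(y\circ{id}_{I[a]},\,h(a))=(y,a)$. If $a=1$, then $(y,\varepsilon)\in\mathbf{H}^{[\mathcal{F}(\mathcal{S}^1)]}$ and $\mathbf{H}^{\mathbf{t}}(y,\varepsilon)=(y\circ t[\varepsilon],\,1)=(y,1)$. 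In both cases $(y,a)$ lies in the image, so $\mathbf{H}^{\mathbf{t}}$ is onto.

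There is no genuine obstacle here; the only thing demanding attention is the clerical matter of keeping straight which of $\mathcal{S}$, $\mathcal{S}^1$, $\mathcal{F}(\mathcal{S}^1)$ a given $I[\cdot]$ comes from, and verifying that $t$ restricted to words of length $\le 1$ is the identity. In particular, the part of $\mathbf{H}^{[\mathcal{S}^1]}$ produced by the adjoined unit is accounted for precisely by the $w=\varepsilon$ case, which is why passing through $\mathcal{S}^1$ (rather than $\mathcal{S}$) matters.
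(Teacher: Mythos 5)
Your proposal is correct and follows the paper's own route: the homomorphism part is delegated to Theorem~\ref{trans} via the preceding lemma establishing that $\mathbf{t}$ is a transformation, and surjectivity rests on Lemma~\ref{well-defd}(1), i.e.\ $t[s]={id}_{I[s]}$ for $s\in S$ (with the adjoined unit handled by the empty word). You merely spell out in more detail what the paper compresses into two sentences.
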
  

\begin{proof}
The map $\mathbf{H}^{\mathbf{t}}$ is a homomorphism by Lemma~\ref{free-t}.
Surjectivity follows from Lemma~\ref{well-defd}(1). 
\end{proof}

\begin{cor}
Let $\mathcal{S}$ be a $\lambda\rho$-system over a semigroup $\mathbf{S}$, and
let $\mathbf{H}$ be a semigroup. Then,
$\mathbf{H}^{[\mathcal{S}]}\in SH(\mathbf{H}^{[\mathcal{F}(\mathcal{S}^1)]})$.
Therefore, $\mathbf{H}^{[\mathcal{S}]}$ divides $\mathbf{H}^{[\mathcal{F}(\mathcal{S}^1)]}$.
\end{cor}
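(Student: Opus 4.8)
The plan is to derive the corollary directly from Theorem~\ref{divide} together with the setup established in the preceding lemmas. Recall that Theorem~\ref{divide} gives a surjective homomorphism $\mathbf{H}^{\mathbf{t}}\colon \mathbf{H}^{[\mathcal{F}(\mathcal{S}^1)]}\to \mathbf{H}^{[\mathcal{S}^1]}$, so immediately $\mathbf{H}^{[\mathcal{S}^1]}\in H(\mathbf{H}^{[\mathcal{F}(\mathcal{S}^1)]})$. What remains is to bridge the gap between $\mathbf{H}^{[\mathcal{S}^1]}$ and $\mathbf{H}^{[\mathcal{S}]}$, and this is precisely the content of Lemma~\ref{lr-prod-with-unit}: since $\mathcal{S}$ is a subsystem of its unital extension $\mathcal{S}^1$ (Lemma~\ref{unit-added}), we have that $\mathbf{H}^{[\mathcal{S}]}$ is a subsemigroup of $\mathbf{H}^{[\mathcal{S}^1]}$.

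First I would invoke Theorem~\ref{divide} to obtain the surjection onto $\mathbf{H}^{[\mathcal{S}^1]}$, witnessing $\mathbf{H}^{[\mathcal{S}^1]}\in H(\mathbf{H}^{[\mathcal{F}(\mathcal{S}^1)]})$. Next I would apply Lemma~\ref{lr-prod-with-unit} to get the embedding $\mathbf{H}^{[\mathcal{S}]}\hookrightarrow \mathbf{H}^{[\mathcal{S}^1]}$, so $\mathbf{H}^{[\mathcal{S}]}\in S(\mathbf{H}^{[\mathcal{S}^1]})$. Composing, $\mathbf{H}^{[\mathcal{S}]}\in SH(\mathbf{H}^{[\mathcal{F}(\mathcal{S}^1)]})$, which is the first assertion. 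For the second assertion, the standard notion of division of semigroups is: $\mathbf{A}$ divides $\mathbf{B}$ iff $\mathbf{A}$ is a homomorphic image of a subsemigroup of $\mathbf{B}$, i.e.\ $\mathbf{A}\in HS(\mathbf{B})$. Since $SH\subseteq HS$ as operator compositions on semigroup classes (a homomorphic image of a subsemigroup of $\mathbf{B}$ can absorb a prior "take subsemigroup then take quotient" — concretely, if $\mathbf{A}$ is a quotient of a subsemigroup $\mathbf{C}\leq\mathbf{B}$ that is itself a quotient $\phi\colon \mathbf{D}\twoheadrightarrow\mathbf{C}$ with $\mathbf{D}\leq\mathbf{B}$, then $\mathbf{A}$ is a quotient of $\mathbf{D}\leq\mathbf{B}$), we conclude $\mathbf{H}^{[\mathcal{S}]}\in HS(\mathbf{H}^{[\mathcal{F}(\mathcal{S}^1)]})$, i.e.\ $\mathbf{H}^{[\mathcal{S}]}$ divides $\mathbf{H}^{[\mathcal{F}(\mathcal{S}^1)]}$.

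Concretely, the cleanest route avoids any operator-calculus subtlety: let $\mathbf{H}^{\mathbf{t}}\colon \mathbf{H}^{[\mathcal{F}(\mathcal{S}^1)]}\to \mathbf{H}^{[\mathcal{S}^1]}$ be the surjection of Theorem~\ref{divide}, and let $K$ be its preimage of the subsemigroup $\mathbf{H}^{[\mathcal{S}]}\leq\mathbf{H}^{[\mathcal{S}^1]}$. Then $\mathbf{K} = (\mathbf{H}^{\mathbf{t}})^{-1}(\mathbf{H}^{[\mathcal{S}]})$ is a subsemigroup of $\mathbf{H}^{[\mathcal{F}(\mathcal{S}^1)]}$, and the restriction $\mathbf{H}^{\mathbf{t}}|_{\mathbf{K}}\colon \mathbf{K}\twoheadrightarrow \mathbf{H}^{[\mathcal{S}]}$ is a surjective homomorphism. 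This exhibits $\mathbf{H}^{[\mathcal{S}]}$ as a homomorphic image of a subsemigroup of $\mathbf{H}^{[\mathcal{F}(\mathcal{S}^1)]}$, which is exactly division.

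I do not expect any real obstacle here; the corollary is a bookkeeping consequence of Theorem~\ref{divide} and Lemma~\ref{lr-prod-with-unit}. The only point requiring a moment's care is the relationship between the operator strings $SH$ and $HS$ and the definition of "divides" the authors intend — but since taking the preimage of a subsemigroup under a surjective homomorphism is again a subsemigroup mapping onto it, the $SH$ containment can always be rewritten as an $HS$ containment, so both stated forms hold simultaneously. The proof can therefore be kept to two or three lines.
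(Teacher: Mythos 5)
Your proposal is correct and follows essentially the same route as the paper: the authors likewise combine the embedding $\mathbf{H}^{[\mathcal{S}]}\leq\mathbf{H}^{[\mathcal{S}^1]}$ from Lemma~\ref{lr-prod-with-unit} with the surjection of Theorem~\ref{divide} to get membership in $SH$, and then cite the standard fact $SH\leq HS$ for division. Your explicit preimage construction is just that standard fact spelled out, so nothing is missing.
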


\begin{proof}
By Lemma~\ref{lr-prod-with-unit}, and the well known universal algebraic
fact that $SH\leq HS$.
\end{proof}

\begin{rem}
The free construction we presented could be made even `freer' by taking 
$I[s_1\dots s_n] = I[s_1]\times\dots\times I[s_n]$. But then
Lemma~\ref{well-defd}(1) is no longer true, and consequently we lose surjectivity
in Theorem~\ref{divide}. 
\end{rem}  

\section{Wreath products}\label{wreath}

We will now return to the promise made in the introduction and prove that
every wreath product can be realised as a $\lambda\rho$-product.
Let $(X,\mathbf G)$ consist of a set $X$ and a group $\mathbf{G}$
acting on $X$ on the left, so that
$(x\cdot a)\cdot b = x\cdot ab$, for every $x\in X$ and every $a,b\in G$.
For any such $(X,\mathbf G)$ and any group $\mathbf{H}$ recall that 
their wreath product is defined as 
$$
\mathbf H\wr (X,\mathbf G)=(H^X\times G, *)
$$ 
with multiplication 
$$
(a,x)*(b,y)= (a\cdot (b\circ(\bl \cdot x)),\ xy).
$$
It is easy to see, that any $(X,\mathbf G)$ can be viewed by a
$\lambda\rho$-system
$$
\mathcal{S}(X,\mathbf G) =
\bigl(\langle \lambda[a,b],\rho[a,b]\rangle\colon
I[ab]\to I[a]\times I[b]\bigr),
$$
where $I[s]=X$ for any $s\in S$, and 
\begin{enumerate}
\item $\lambda[a,b] = {id}_X$ for any $a,b\in S$,
\item $\rho[a,b] = \bl\cdot a$ for all $a,b\in S$. 
\end{enumerate}
as in Example~\ref{sgrp-act}. Then 
$\mathbf H^{[\mathcal S(X,\mathbf G)]}\cong \mathbf{H}\wr (X,\mathbf G)$.

\begin{theorem}\label{wr-prod}
Let $\mathcal{S} = (\mathbf{G},\mathbf{I},\blambda,\brho)$  be a
$\lambda\rho$-system. The, the following are equivalent:
\begin{enumerate}
\item $\mathcal{S}$ is group-preserving,
\item $\mathbf{G}$ is a group and $\mathcal{S}$ is unital,
\item $\mathcal{S}\cong \mathcal{S}(X,\mathbf{G})$
for some group $\mathbf{G}$ acting on some set $X$. 
\end{enumerate}
\end{theorem}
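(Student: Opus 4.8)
The plan is to prove the cycle $(3)\Rightarrow(1)\Rightarrow(2)\Rightarrow(3)$, with the last implication carrying essentially all of the work. The implication $(3)\Rightarrow(1)$ is the routine, classical part: if $\mathcal{S}\cong\mathcal{S}(X,\mathbf{G})$, then by functoriality of $\mathbf{H}^{(-)}$ (Theorem~\ref{trans}) and the isomorphism $\mathbf{H}^{[\mathcal{S}(X,\mathbf{G})]}\cong\mathbf{H}\wr(X,\mathbf{G})$ noted just above the theorem, it suffices to check that the wreath product $\mathbf{H}\wr(X,\mathbf{G})$ is a group whenever $\mathbf{H}$ is a group: the pair $(\overline{e},e)$ (the constant map to $e\in H$ together with $e\in G$) is a two-sided unit, and $(a,x)$ has inverse $\bigl(a^{-1}\circ(\bl\cdot x^{-1}),\,x^{-1}\bigr)$, where one uses that $\bl\cdot x$ is a bijection of $X$ and that $(\bl\cdot x^{-1})\circ(\bl\cdot x)={id}_X$.

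For $(1)\Rightarrow(2)$: since $\mathbf{1}$ is a group, $\mathbf{1}^{[\mathcal{S}]}$ is a group, and as $\mathbf{1}^{[\mathcal{S}]}\cong\mathbf{S}$ (Example~\ref{skel}) we obtain that $\mathbf{S}=\mathbf{G}$ is a group; write $e$ for its unit. By Theorem~\ref{main-monoid} it then remains to show $\lambda[a,e]=\rho[e,a]={id}_{I[a]}$ for all $a$, and for this we mimic the proof of that theorem: take a nontrivial group $\mathbf{H}$, so that $\mathbf{H}^{[\mathcal{S}]}$ is a group with some unit $(y,b)$; from $(y,b)\star(x,a)=(x,a)$ we read off $ba=a$ for all $a$, hence $b=e$; feeding in the constant map $1\in H^{I[a]}$ forces $y\circ\lambda[e,a]$ to be constant $1$, after which $(y,e)\star(x,a)=(x,a)$ collapses to $x\circ\rho[e,a]=x$ for every $x\in H^{I[a]}$, and since $|H|\ge2$ this forces $\rho[e,a]={id}_{I[a]}$; the argument for $\lambda[a,e]$ is symmetric.

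The heart of the matter is $(2)\Rightarrow(3)$. Assume $\mathbf{G}$ is a group with unit $e$ and $\mathcal{S}$ is unital. First, $(\alpha)$ together with unitality shows that every $\lambda[a,b]$ is a bijection with $\lambda[a,b]^{-1}=\lambda[ab,b^{-1}]$ (from $\lambda[a,b]\circ\lambda[ab,b^{-1}]=\lambda[a,e]={id}$ and $\lambda[ab,b^{-1}]\circ\lambda[a,b]=\lambda[ab,e]={id}$), and symmetrically via $(\beta)$ every $\rho[a,b]$ is a bijection. Put $X:=I[e]$ and $\phi_a:=\lambda[e,a]\colon I[a]\to X$; each $\phi_a$ is a bijection, $\phi_e={id}_X$, and $(\alpha)$ gives $\phi_a\circ\lambda[a,b]=\lambda[e,a]\circ\lambda[a,b]=\lambda[e,ab]=\phi_{ab}$. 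Now define $R_a:=\lambda[e,a^{-1}]\circ\rho[a,a^{-1}]\colon X\to X$. A short computation—rewriting $\rho[a,b]\circ\lambda[ab,(ab)^{-1}]$ by $(\gamma)$ and then collapsing $\lambda[e,b]\circ\lambda[b,(ab)^{-1}]$ by $(\alpha)$—yields, for every $b\in G$,
$$\phi_b\circ\rho[a,b]\circ\phi_{ab}^{-1}=R_a,$$
so that in particular $R_a$ does not depend on the choice of $b$. From unitality $R_e=\lambda[e,e]\circ\rho[e,e]={id}_X$, and post-composing $(\beta)$ with $\phi_c$ and using the displayed identity three times gives $R_b\circ R_a\circ\phi_{abc}=R_{ab}\circ\phi_{abc}$, hence $R_b\circ R_a=R_{ab}$ since $\phi_{abc}$ is onto. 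Thus $x\cdot a:=R_a(x)$ defines an action of the group $\mathbf{G}$ on $X$ with $(x\cdot a)\cdot b=x\cdot(ab)$ and $x\cdot e=x$, necessarily by bijections (as $R_a\circ R_{a^{-1}}=R_e={id}_X$). Finally, the displayed identity says precisely that $(\phi_a)_{a\in G}$ is a slice transformation $\mathcal{S}\to\mathcal{S}(X,\mathbf{G})$, and as each $\phi_a$ is a bijection it is an isomorphism in $\Gamma(\bm{\lambda\rho})$; this is $(3)$.

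The main obstacle is $(2)\Rightarrow(3)$, and within it the crux is to guess the right map $R_a$ and to prove the displayed identity $\phi_b\circ\rho[a,b]\circ\phi_{ab}^{-1}=R_a$; once that is in hand, the independence of $b$, the action axioms, and the assembly of the $\phi_a$ into an isomorphism of $\lambda\rho$-systems are all formal. The other two implications merely recycle Example~\ref{skel}, Theorem~\ref{main-monoid}, Theorem~\ref{trans}, and the standard facts about wreath products of groups.
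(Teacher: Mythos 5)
Your proposal is correct and follows essentially the same route as the paper: deduce that $\mathbf{G}$ is a group via $\mathbf{1}^{[\mathcal{S}]}$ and unitality via the argument of Theorem~\ref{main-monoid}, then for (2) $\Rightarrow$ (3) show all $\lambda[a,b]$, $\rho[a,b]$ are bijections, act on $X=I[e]$ by $R_a=\rho[a,e]\circ\lambda[e,a]^{-1}$ (your $R_a=\lambda[e,a^{-1}]\circ\rho[a,a^{-1}]$ is the same map), and exhibit $(\lambda[e,a])_{a\in G}$ as an isomorphism onto $\mathcal{S}(X,\mathbf{G})$, closing the cycle with the fact that wreath products of groups are groups. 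Your packaging of the key computation as the single conjugation identity $\phi_b\circ\rho[a,b]\circ\phi_{ab}^{-1}=R_a$ is a tidy reorganisation of the paper's identity $(\ddag)$, but not a different method.
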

\begin{proof}
Recall from Definition~\ref{P-preserving} that group-preserving means
$\mathbf{H}^{[\mathcal{S}]}$ is a group for any group $\mathbf{H}$.  

\smallskip\noindent
(1) $\Rightarrow$ (2). Group-preserving
$\lambda\rho$-systems preserve units, so by Theorem~\ref{main-monoid}
$\mathcal{S}$ is unital. Since $\mathbf{1}$ is a (trivial) group and $\mathcal{S}$ is
group-preserving then $\mathbf{1}^{[\mathcal S]}\cong \mathbf{G}$, and so
$\mathbf{G}$ is a group.   

\smallskip\noindent
(2) $\Rightarrow$ (3).
Let $\mathcal{S} = (\mathbf{G},\mathbf{I},\blambda,\brho)$  be a
unital $\lambda\rho$-system, with $\mathbf{G}$ a group.
Since $\mathcal{S}$ is unital, we have
$$
{id}_{I[x]}=\lambda[x,e]=\lambda [x,yy^{-1}] =
\lambda [x,y]\circ \lambda[xy,y^{-1}]
$$
for all $x,y\in G$ ($e$ is the unit of $\mathbf{G}$, of course).
Consequently, $\lambda [x,y]$ is surjective
and $\lambda [xy,y^{-1}]$ is injective for all $x,y\in G$. However,
$\lambda[x,y]=\lambda [xyy^{-1},y]$ and thus $\lambda [x,y]$ is a
bijection. Analogously we can prove bijectivity of $\rho[x,y]$. 
 
\begin{claim}
Consider the pair $(I[e],\mathbf G)$, 
The operation $\cdot\colon I[e]\times
G\longrightarrow I[e]$ defined by  
$$
i\cdot x = (\rho[x,e]\circ\lambda[e,x]^{-1})(i)
$$
is a group action.
\end{claim}
\begin{proof}
Substituting equalities
\begin{eqnarray*}
\lambda[ex,y]&=&\lambda[e,x]^{-1}\circ \lambda[e,xy]\\
\rho[x,ye]&=&\rho[y,e]^{-1}\circ \rho[xy,e]
\end{eqnarray*}
into the equality
$$
\lambda[e,y]\circ \rho[x,e\cdot y]=\rho[x,e]\circ\lambda[xe,y]
$$
we obtain
$$
\lambda [e,y]\circ \rho[y,e]^{-1}\circ
\rho[xy,e]=\rho[x,e]\circ\lambda[e,x]^{-1}\circ\lambda[e,xy]
$$
and hence
$$
\rho[y,e]\circ\lambda[e,y]^{-1} \circ\rho[e,x]\circ  \lambda
[e,x]^{-1}=\rho[xy,e]\circ \lambda[e,xy]^{-1}.\eqno{(\ddag)}
$$
It is easy to see that the last equality implies $(i\cdot x)\cdot y= i\cdot
(x\cdot y)$ for all $i\in I[e]$ and $x,y\in G$. 
\end{proof}

We will show that the system of bijections
$\mathbf{t} = \bigl(\lambda[e,x]\colon I[x]\longrightarrow I[e]\bigr)_{x\in G}$ form a
transformation and thus an isomorphism of $\lambda\rho$-systems $\mathcal{S}$ and
$\mathcal{S}(I[e],\mathbf{G})$. We need to prove 
commutativity of the following diagrams: 
$$
\begin{tikzpicture}[>=stealth,auto]
\node (tab) at (0,0) {$S(xy)$};
\node (ab) at (3,0) {$S(e)$};
\node (ta) at (0,-2) {$S(x)$};
\node (a) at (3,-2) {$S(e)$};
\draw[->] (tab) to node {$\lambda[e,xy]$} (ab);
\draw[->] (tab) to node[swap] {$\lambda [x,y]$} (ta);
\draw[->] (ab) to node[swap] {$id_{S(e)}$} (a);
\draw[->] (ta) to node {$\lambda[e,x]$} (a);
\end{tikzpicture} 
\qquad
\begin{tikzpicture}[>=stealth,auto]
\node (tab) at (0,0) {$S(xy)$};
\node (ab) at (4,0) {$S(e)$};
\node (tb) at (0,-2) {$S(x)$};
\node (b) at (4,-2) {$S(e).$};
\draw[->] (tab) to node {$\lambda[e,xy]$} (ab);
\draw[->] (tab) to node[swap] {$\rho[x,y]$} (tb);
\draw[->] (ab) to node[swap] {$\rho[x,e]\circ\lambda[e,x]^{-1}$} (b);
\draw[->] (tb) to node {$\lambda[e,y]$} (b);
\end{tikzpicture} 
$$
Commutativity of the first diagram is clear. Applying
$\rho[x,y]=\rho[y,e]^{-1}\circ\rho[xy,e]$ to $(\ddag)$, we obtain 
$$\lambda[e,y]^{-1}\circ\rho[e,x]\circ\lambda[e,x]^{-1}=\rho[x,y]\circ\lambda [e,xy]^{-1}$$
which proves commutativity of the second diagram. 

\smallskip\noindent
(3) $\Rightarrow$ (1). Follows from the fact that wreath product of groups
is a group. 
\end{proof}

Theorem~\ref{wr-prod} shows that $\lambda\rho$-products of groups over groups coincide
with wreath products. We already saw that for semigroups the
notion of a $\lambda\rho$-products is more general. In particular,
the two-sided wreath product of semigroups (see, e.g.,~\cite{RT89})
can be accommodated. 

\begin{example}\label{wr-two-sided}
Let $\mathcal{S}(X,\ld,\rd,\mathbf{S})$ be the $\lambda\rho$-system of
Example~\ref{sgrp-act-two-sided}.  Then,
$\mathbf{H}^{\mathcal S(X,\backslash,/,\mathbf S)}$ is (isomorphic to) the two-sided wreath
product of\/ $\mathbf{H}$ and $\mathbf{S}$.  
\end{example}

Combining Krohn-Rhodes Theorem, Theorem~\ref{wr-prod}, and
Example~\ref{flip-flop}, we get our final result.

\begin{cor}
Every finite semigroup divides an iterated
$\lambda\rho$-product whose factors are finite simple groups and a two-element
semilattice.
\end{cor}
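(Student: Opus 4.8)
The plan is to assemble the corollary from three ingredients already available: the classical Krohn--Rhodes decomposition, the identification of wreath products with $\lambda\rho$-products given by Theorem~\ref{wr-prod}, and the computation in Example~\ref{flip-flop} showing that the flip-flop monoid strongly divides a $\lambda\rho$-product of $\mathbb{Z}_2$ over the two-element semilattice $\mathbf{2}$. First I would recall the statement of Krohn--Rhodes in the form most convenient here: every finite semigroup $\mathbf{T}$ divides an iterated wreath product $\mathbf{A}_1\wr\mathbf{A}_2\wr\cdots\wr\mathbf{A}_n$ where each factor $\mathbf{A}_i$ is either a finite simple group dividing $\mathbf{T}$ or a copy of the three-element flip-flop monoid (equivalently, one of the ``units'' $U_1, U_2$ in the standard formulations). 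By Theorem~\ref{wr-prod}, each wreath product $\mathbf{H}\wr(X,\mathbf{G})$ is (isomorphic to) a $\lambda\rho$-product $\mathbf{H}^{[\mathcal{S}(X,\mathbf{G})]}$, so an iterated wreath product is visibly an iterated $\lambda\rho$-product; the only factors that are not finite simple groups are the flip-flop monoids.

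The second step is to eliminate the flip-flop monoids in favour of $\lambda\rho$-products over the semilattice $\mathbf{2}$. Example~\ref{flip-flop} exhibits a $\lambda\rho$-system $\mathcal{Z}$ over $\mathbf{2}$ such that the left flip-flop monoid is a quotient of a subsemigroup of $\mathbb{Z}_2^{[\mathcal{Z}]}$ (in the paper's terminology, it \emph{strongly divides} $\mathbb{Z}_2^{[\mathcal{Z}]}$); the right flip-flop is handled symmetrically. Since $\mathbb{Z}_2$ is itself a finite simple group and $\mathbf{2}$ is the two-element semilattice, each occurrence of a flip-flop factor in the Krohn--Rhodes decomposition can be replaced, up to division, by the $\lambda\rho$-product $\mathbb{Z}_2^{[\mathcal{Z}]}$, whose ``factors'' (the coefficient semigroup $\mathbb{Z}_2$ and the skeleton $\mathbf{2}$) are both of the allowed type.

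The final step is a division-bookkeeping argument showing that these substitutions propagate through an iterated wreath/$\lambda\rho$-product. One needs the standard facts that division is transitive and is compatible with the wreath product in each coordinate, i.e.\ if $\mathbf{A}'$ divides $\mathbf{A}$ and $\mathbf{B}'$ divides $\mathbf{B}$ then $\mathbf{A}'\wr\mathbf{B}'$ divides $\mathbf{A}\wr\mathbf{B}$ (for a suitable choice of the accompanying actions); together with the observation that if $\mathbf{A}$ itself divides an iterated $\lambda\rho$-product $\mathcal{D}$ of allowed factors, then $\mathbf{A}\wr\mathbf{B}$ divides $\mathcal{D}\wr\mathbf{B}$, which is again an iterated $\lambda\rho$-product (using Theorem~\ref{wr-prod} to rewrite the outermost $\wr$ as a $\lambda\rho$-product). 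Iterating from the inside out, starting from the Krohn--Rhodes decomposition of the given finite semigroup and replacing each flip-flop factor by $\mathbb{Z}_2^{[\mathcal{Z}]}$, yields an iterated $\lambda\rho$-product all of whose factors are finite simple groups (those from Krohn--Rhodes, together with copies of $\mathbb{Z}_2$) and the two-element semilattice $\mathbf{2}$; since the original semigroup divides the Krohn--Rhodes product, which in turn divides this new iterated $\lambda\rho$-product, we are done.

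I expect the main obstacle to be the bookkeeping in the last step: verifying that ``strongly divides'' (the $SH$-style division witnessed in Examples~\ref{lzero} and~\ref{flip-flop}) is stable under forming wreath products in the relevant coordinate, and that the wreath-product action needed to absorb $\mathbf{2}$ as a skeleton is compatible with the chain being built. Everything else is either quoted (Krohn--Rhodes, Theorem~\ref{wr-prod}, Example~\ref{flip-flop}) or a routine appeal to transitivity of division and the $SH\subseteq HS$ inequality; the genuinely load-bearing point is that one may freely interleave the ``semilattice skeleton'' trick into an iterated construction without disturbing the factors already placed.
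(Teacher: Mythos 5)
Your argument is exactly the paper's: the corollary is obtained by combining the Krohn--Rhodes decomposition, the identification of wreath products with $\lambda\rho$-products from Section~\ref{wreath} (Theorem~\ref{wr-prod} and the surrounding discussion), and Example~\ref{flip-flop} replacing the flip-flop by a $\lambda\rho$-product of $\mathbb{Z}_2$ over the two-element semilattice. The division bookkeeping you flag (transitivity and compatibility of division with wreath products in each coordinate) is standard Krohn--Rhodes material that the paper leaves implicit, so your proposal is correct and follows essentially the same route.
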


\section{Acknowledgement}

This project has received funding from the European Union’s Horizon 2020
research and innovation programme under the Marie Skłodowska-Curie grant
agreement No.~689176.

\begin{bibdiv}
\begin{biblist}  

\bibselect{lambda-rho-systems-current}

\end{biblist}
\end{bibdiv}

\end{document}